\title[The mixed mock modularity of a new $U$-type function]{The mixed mock modularity of a new $U$-type function related to the  Andrews--Gordon identities}
\author{Nikolay E. Borozenets}
\address{Department of Mathematics and Computer Science, Saint Petersburg State University, Saint Petersburg, 199034, Russia}
\email{nikolayborozenets.spbumcs@gmail.com}
\newcommand\N{\mathbb{N}}
\newcommand\Z{\mathbb{Z}}
\newcommand\C{\mathbb{C}}
\renewcommand\theta{\vartheta}
\newcommand{\Mod}[1]{\ (\mathrm{mod}\ #1)}
\newcommand\myatop[2]{\left[{{#1}\atop#2}\right]}
\newcommand\sg{\operatorname{sg}}
\newtheorem{theorem}{Theorem}
\newtheorem{lemma}[theorem]{Lemma}
\newtheorem{corollary}[theorem]{Corollary}
\newtheorem{proposition}[theorem]{Proposition}
\theoremstyle{definition}
\newtheorem{definition}[theorem]{Definition}
\newtheorem{remark}[theorem]{Remark} 
\newtheorem{example}[theorem]{Example}
\numberwithin{theorem}{section} 
\numberwithin{equation}{section}
\begin{document}

\date{6 October 2022}

\subjclass[2020]{11F11, 11F27, 11F37, 33D15}

\keywords{Appell functions, theta functions, indefinite theta series, Hecke-type double-sums, mock modular forms}

\begin{abstract}
    In this paper we resolve a question by Bringmann, Lovejoy, and Rolen on a new vector-valued $U$-type function. We obtain an expression for a corresponding family of Hecke--Appell-type sums in terms of mixed mock modular forms; that is, we express the sum in terms of Appell functions and theta functions. This $U$-type function appears from considering the special polynomials related to generating functions for the partitions occurring in Gordon’s generalization of the Rogers--Ramanujan identities.
\end{abstract}

\maketitle

\section{Introduction and Statement of the Main Results}
Let $q$ be a nonzero complex number with $|q|<1$.  We recall the $q$-Pochhammer notation, defined by
\begin{equation*}
(x)_n=(x;q)_n:=\prod_{i=0}^{n-1}(1-xq^i), \ \ (x)_{\infty}=(x;q)_{\infty}:=\prod_{i\ge 0}(1-xq^i)
\end{equation*}
along with the Gaussian polynomials
\begin{equation*}
\myatop{n}{k}_q := \begin{cases}
   \frac{(q)_n}{(q)_k(q)_{n-k}}, & 0 \leq k \leq n\\
   0, & \text{otherwise}.
\end{cases}
\end{equation*}
We begin by introducing the special polynomials $H_n(t,m;b;q)$ for $t \in \mathbb{N}$, $1\leq m \leq t$, $b\in\{0,1\}$ according to the \cite{BLR}:
\begin{equation*}
H_n(t,m;b;q) := \sum_{n=n_t\geq \dots \geq n_1 \geq 0} \prod_{j=1}^{t-1}q^{n^2_j+(1-b)n_j}\myatop{n_{j+1}-n_j-bj+\sum_{r=1}^{j}(2n_r+\chi_{m>r})}{n_{j+1}-n_j}_q.
\end{equation*}
Here we use the usual characteristic function $\chi_A$, defined to be $1$ if $A$ is true and $0$ otherwise.
These polynomials are related to generating functions for the partitions occurring in Gordon’s generalization of the Rogers--Ramanujan identities \cite{W}. To describe it let $G_{k,i,i',L}(q)$ be the generating function for partitions of the form
\begin{equation*}
\sum_{j=1}^{L-1}jf_j
\end{equation*}
with frequency conditions $f_1\leq i-1$, $f_{L-1}\leq i'-1$, $f_i+f_{i+1}\leq k$ for $1\leq k \leq L-2$. These conditions are a finitization of the conditions in Gordon partition identity \cite[Theorem 3]{W}. In this way the corresponding analytic identity by Andrews \cite[Theorem 4]{W} 
\begin{equation*}
\sum_{n_1,\dots,n_k \geq 0} \frac{q^{N^2_1+\cdots+N^2_k+N_i+\cdots+N_k}}{(q)_{n_1}\cdots(q)_{n_k}} = \frac{1}{(q)_{\infty}}\sum_{j\in \Z}(-1)^j
q^{j((2k+3)(j+1)-2i)/2}
\end{equation*}
where $N_j = n_j+\cdots+n_k$ appears to be a limiting case of some polynomial identities. By making some changes of variables and comparing with \cite[Theorem 5]{W}, it can be shown that
\begin{equation*}
H_n(t,m;b;q^{-1}) = q^{(t-1)bn-2(t-1)\binom{n+1}{2}} G_{t-1,m,t,2n-b+1}(q).
\end{equation*} 

In this paper we will study the following function found in a recent paper of Bringmann, Lovejoy, and Rolen \cite[Section 4]{BLR}.

\begin{definition} \label{def:Umt} For $x\in \C \smallsetminus \{0\}$ and $1 \leq m \leq t$ we define
\[ \mathcal{U}_{t}^{(m)}(x;q) := \sum_{n=0}^{\infty}q^n(-x)_n(-q/x)_nH_n(t,m;0;q). \]
\end{definition}

As stated in \cite[Section 4]{BLR}, one can use Bailey pair methods to find the following Hecke--Appell-type series.

\begin{theorem} \cite[Section $4$, Question $3$]{BLR} \label{theorem:HTformulaforU}
We have
\begin{align*}
\mathcal{U}_{t}^{(m)}(-x;q) &= \frac{(x)_{\infty}(\frac{q}{x})_{\infty}}{(q)^2_{\infty}}\left(\sum_{\substack{r,s, u\geq 0 \\ r\equiv s \Mod 2}}+\sum_{\substack{r,s,u < 0 \\ r\equiv s \Mod 2}}\right)(-1)^{\frac{r-s}{2}}x^u q^{\frac{r^2}{8}+\frac{4t+3}{4}rs+\frac{s^2}{8}+\frac{4t+3-2m}{4}r+\frac{1+2m}{4}s + \frac{r+s}{2}u }\\
&= \frac{(x)_{\infty}(\frac{q}{x})_{\infty}}{(q)^2_{\infty}}\left(\sum_{\substack{r,s\geq 0 \\ r\equiv s \Mod 2}}-\sum_{\substack{r,s < 0 \\ r\equiv s \Mod 2}}\right)\frac{(-1)^{\frac{r-s}{2}}q^{\frac{r^2}{8}+\frac{4t+3}{4}rs+\frac{s^2}{8}+\frac{4t+3-2m}{4}r+\frac{1+2m}{4}s}}{1-xq^{\frac{r+s}{2}}},
\end{align*}
where the second identity holds for $|q| < |x| < 1$.
\end{theorem}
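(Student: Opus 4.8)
The plan is to follow the Bailey-pair route flagged by Bringmann, Lovejoy, and Rolen, in three steps.

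\emph{Step 1: recognise $H_n(t,m;0;q)$ as a Bailey $\beta$-entry.} The factors $q^{n_j^2+n_j}$ and the Gaussian polynomials in the definition of $H_n$ are the footprint of the polynomial Bailey chain relative to $a=q$, so I would first exhibit a Bailey pair $(\alpha_n,\beta_n)$ relative to $a=q$ with $\beta_n=H_n(t,m;0;q)$. Concretely I expect
\[
\alpha_n=\frac{1-q^{2n+1}}{1-q}\,q^{(t+1)n^2+tn}\sum_{|b|\le n}(-1)^b q^{-(2t+1)\binom b2-mb},
\]
the finitised Andrews--Gordon/Bressoud theta coefficient attached to the frequency conditions defining $G_{t-1,m,t,2n+1}$; its internal variable $b$ will become the second lattice variable of the Hecke--Appell sum, and its $b$-linear term $-mb$ is what carries the dependence on $m$ into the final answer. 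Checking that this $\alpha_n$ is the one matched to $\beta_n=H_n$ --- equivalently, that the shifts $\chi_{m>r}$ in the Gaussian polynomials reproduce exactly this coefficient under the chain --- is the combinatorial heart of the argument, and is where I expect the real work; it can also be read off from the known bosonic (alternating-sign) form of $G_{t-1,m,t,2n+1}$.

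\emph{Step 2: apply Bailey's lemma.} Since $q^n(x)_n(q/x)_n=(\rho_1)_n(\rho_2)_n(aq/\rho_1\rho_2)^n$ for $a=q$, $\rho_1=x$, $\rho_2=q/x$, the two-parameter Bailey lemma turns $\mathcal{U}_t^{(m)}(-x;q)=\sum_{n\ge0}q^n(x)_n(q/x)_nH_n$ into
\[
\frac{(q^2/x)_\infty(qx)_\infty}{(q^2)_\infty(q)_\infty}\sum_{n\ge0}\frac{(x)_n(q/x)_n}{(q^2/x)_n(qx)_n}q^n\alpha_n
=\frac{(1-q)(x)_\infty(q/x)_\infty}{(q)^2_\infty}\sum_{n\ge0}\frac{q^n\alpha_n}{(1-xq^n)(1-q^{n+1}/x)},
\]
after the elementary simplifications $(x)_n/(qx)_n=(1-x)/(1-xq^n)$ and $(q/x)_n/(q^2/x)_n=(1-q/x)/(1-q^{n+1}/x)$. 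I would then use the partial-fraction identity
\[
\frac{1}{(1-xq^n)(1-q^{n+1}/x)}=\frac{1}{1-q^{2n+1}}\left(\frac{1}{1-xq^n}+\frac{1}{1-q^{n+1}/x}-1\right),
\]
whose factor $1/(1-q^{2n+1})$ cancels the $1-q^{2n+1}$ in $\alpha_n$ and whose accompanying $1-q$ removes the spurious $1-q$. Setting $A_n:=\dfrac{(1-q)q^n\alpha_n}{1-q^{2n+1}}=q^{(t+1)(n^2+n)}\sum_{|b|\le n}(-1)^bq^{-(2t+1)\binom b2-mb}$ --- so that $A_n=0$ for $n<0$, and $A_n$ together with its summation range is invariant under $n\mapsto-n-1$ --- one is left with
\[
\mathcal{U}_t^{(m)}(-x;q)=\frac{(x)_\infty(q/x)_\infty}{(q)^2_\infty}\sum_{n\ge0}A_n\left(\frac1{1-xq^n}+\frac1{1-q^{n+1}/x}-1\right),
\]
which already exhibits the prefactor of the statement.

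\emph{Step 3: expand, reindex, and change variables.} Working in the annulus $|q|<|x|<1$, expand $\frac1{1-xq^n}=\sum_{u\ge0}x^uq^{nu}$ and $\frac1{1-q^{n+1}/x}=\sum_{u\ge0}x^{-u}q^{(n+1)u}$; the $u=0$ term of the second series cancels the $-1$, and in the remaining ($u\ge1$) part of that series the substitution $n\mapsto-n-1$, $u\mapsto-u$ produces a sum over $n\le-1$, $u\le-1$ with the same $q$-exponent, by the invariances just noted. Substituting the formula for $A_n$ and putting $r=n+b$, $s=n-b$ (so $n=\tfrac{r+s}{2}$, $b=\tfrac{r-s}{2}$, and $r\equiv s\pmod 2$ automatically), a direct computation gives
\[
(t+1)(n^2+n)+nu-(2t+1)\binom b2-mb=\tfrac{r^2}{8}+\tfrac{4t+3}{4}rs+\tfrac{s^2}{8}+\tfrac{4t+3-2m}{4}r+\tfrac{1+2m}{4}s+\tfrac{r+s}{2}u,
\]
while $(-1)^b=(-1)^{(r-s)/2}$; moreover $n\ge0,\ |b|\le n,\ u\ge0$ becomes $r,s,u\ge0$, and $n\le-1,\ |b|\le-n-1,\ u\le-1$ becomes $r,s,u<0$. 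This is the first displayed identity. The second follows by resumming the geometric series in $u$: $\sum_{u\ge0}x^uq^{ku}=\frac1{1-xq^k}$ for $k=\tfrac{r+s}{2}\ge0$ and $-\sum_{u<0}x^uq^{ku}=\frac1{1-xq^k}$ for $k<0$, both valid exactly in the range $|q|<|x|<1$ named in the statement, which converts the two cones into $\left(\sum_{r,s\ge0}-\sum_{r,s<0}\right)$ weighted by $1/(1-xq^{(r+s)/2})$.
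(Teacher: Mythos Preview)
The paper does not give its own proof of this theorem; it is quoted from \cite[Section~4, Question~3]{BLR} with the remark that ``one can use Bailey pair methods'' to obtain it, and the paper then takes the identity as input. So there is no argument in the paper to compare yours against, beyond the fact that your proposal is exactly the Bailey-pair route the paper alludes to.

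That said, your sketch is sound. Steps~2 and~3 are correct as written: the specialisation $a=q$, $\rho_1=x$, $\rho_2=q/x$ in Bailey's lemma produces the claimed prefactor; the partial-fraction identity
\[
\frac{1}{(1-xq^n)(1-q^{n+1}/x)}=\frac{1}{1-q^{2n+1}}\Bigl(\frac{1}{1-xq^n}+\frac{1}{1-q^{n+1}/x}-1\Bigr)
\]
does cancel the factor $1-q^{2n+1}$ in $\alpha_n$ and the stray $1-q$; the cancellation of the $-1$ against the $u=0$ term of the second geometric series is correct; and the change of variables $r=n+b$, $s=n-b$ reproduces the stated exponent and maps the two triangular regions onto the two cones $r,s,u\ge 0$ and $r,s,u<0$. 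The passage from the first displayed form to the second is the trivial resummation of the geometric series in $u$, valid precisely on $|q|<|x|<1$.

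The only unfinished piece is Step~1. You have the right candidate $\alpha_n$ --- indeed, since Steps~2 and~3 are reversible and the $\alpha$-side of a Bailey pair relative to $a=q$ is uniquely determined by the $\beta$-side, your $\alpha_n$ is forced by the target identity --- but a self-contained proof still owes the verification that $(\alpha_n,\beta_n)$ with $\beta_n=H_n(t,m;0;q)$ really is a Bailey pair relative to $a=q$. This is the ``real work'' you flag, and it is most cleanly done either by iterating the Bailey lattice/chain $t-1$ times from a unit Bailey pair, or by invoking the alternating-sign (bosonic) formula for $G_{t-1,m,t,2n+1}$ from \cite{W} together with the relation $H_n(t,m;0;q^{-1})=q^{-2(t-1)\binom{n+1}{2}}G_{t-1,m,t,2n+1}(q)$ recorded in the introduction.
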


The function $\mathcal{U}_{t}^{(m)}(x;q)$ is similar to the function $U_{t}^{(m)}(x;q)$ studied by Hikami and Lovejoy \cite{HL} and Mortenson and Zwegers \cite{MZ}:
\begin{equation*}
U_{t}^{(m)}(x;q) := q^{-t}\sum_{n=1}^{\infty}q^n(-xq)_{n-1}(-q/x)_{n-1} H_n(t,m;1;q).
\end{equation*}

In \cite{HL}, one finds that $U_{t}^{(m)}(-1;q)$ are vector-valued quantum modular forms which are in a sense dual to a certain generalization of the Kontsevich-Zagier function.  Hikami and Lovejoy \cite{HL} also expressed $U_{t}^{(m)}(x;q)$ in terms of a family of Hecke--Appell-type sums analogous to Theorem \ref{theorem:HTformulaforU}, see \cite[Theorem 5.6]{HL}.  Hikami and Lovejoy also asked what kind of modular behavior is expressed by the Hecke--Appell-type sum.  Motivated by the work \cite{HL}, Bringmann, Lovejoy, and Rolen \cite[Section 4]{BLR} posed the following three questions:

\begin{enumerate}
    \item What sort of modular behavior is implied by the Hecke--Appell-type sum in Theorem \ref{theorem:HTformulaforU}?
    \item Are the functions $\mathcal{U}_{t}^{(m)}(-1;q)$ quantum modular forms?
    \item Are the functions $\mathcal{U}_{t}^{(m)}(-1;q)$ related to some sort of Kontsevich--Zagier-type series?
\end{enumerate}

Recently, Mortenson and Zwegers resolved the question of Hikami and Lovejoy and expressed the family of Hecke--Appell-type sums \cite[Theorem 5.6]{HL} in terms of a family of mixed mock modular forms \cite[Theorem 1.7]{MZ}.  Mixed mock modular forms are functions that lie in the tensor space of mock modular forms and modular forms \cite{DMZ}.   In this paper, we will use the methods of \cite{MZ} to answer Bringmann, Lovejoy, and Rolen's first question about $\mathcal{U}_{t}^{(m)}(x;q)$. Questions 2 and 3 will be the focus of future research by the author.

To state our results, we first recall some standard notation:
\begin{equation*}\sg(r,s):=\frac{\sg(r)+\sg(s)}{2}, \ \textup{where} \ 
\sg(r)=\begin{cases} 1 & \text{if}\ r\geq 0,\\ -1 & \text{if}\ r<0.
\end{cases}\end{equation*}
For theta functions, we write
\begin{equation*}
\Theta(x;q):=(x)_{\infty}(q/x)_{\infty}(q)_{\infty}=\sum_{n=-\infty}^{\infty}(-1)^nq^{\binom{n}{2}}x^n.
\end{equation*}
To determine the modularity of $\mathcal{U}_{t}^{(m)}(x;q)$, we will mainly consider the two following functions:
\begin{definition} We define
\begin{equation*}
    g_{t,m}(x) := \sum_{r\equiv s \Mod 2} \sg(r,s) \frac{(-1)^{\frac{r-s}{2}}q^{\frac{r^2}{8}+\frac{4t-1}{4}rs+\frac{s^2}{8}+\frac{4t-1-2m}{4}r+\frac{1+2m}{4}s}}{1-xq^{\frac{r+s}{2}}}
\end{equation*}
and
\begin{equation*}
f_{t,m}(x) := \frac{\Theta(x;q)}{(q)^3_{\infty}}g_{t,m}(x).
\end{equation*}
\end{definition}

\begin{remark} \label{rem:UandF}
Theorem \ref{theorem:HTformulaforU} thus gives
\begin{equation*}
 \mathcal{U}_t^{(m)}(-x;q)= f_{t+1,m}(x),
\end{equation*}
which has as a special case
\begin{equation*}
 \mathcal{U}(-x;q):=\mathcal{U}_{1}^{(1)}(-x;q)= f_{2,1}(x).
\end{equation*}
\end{remark}

Integral to the statement of our main result is the following indefinite binary theta series:

\begin{definition}\label{def:indbinthetaser}
Let $t\in\N$ be fixed.
For $p,m\in\Z$ we define
\begin{equation*}
\theta_{p,m} := \sum_{r\equiv s \Mod 2} \sg(r,s) (-1)^{\frac{r-s}{2}}q^{B_{p,m}(r,s)}
\end{equation*}
and
\begin{equation*}
\theta^{*}_{p,m} := q^{C_{p,m}}\theta_{p,m}^*,
\end{equation*} 
where
\begin{equation*}
B_{p,m}(r,s) := \frac{r^2}{8}+\frac{4t-1}{4}rs+\frac{s^2}{8}+\frac{4p-1-2m}{4}r+\frac{1+2m}{4}s\\
\end{equation*}
and
\begin{equation*}
C_{p,m} := \frac{-2p^2-t(2m+1)(2m+1-4p)}{8t(2t-1)}. 
\end{equation*} 
\end{definition}

Now we are able to formulate the main result of the paper.

\begin{theorem}\label{theorem:main}
For $t\geq 2$ and $1\leq m<t$ we have
\begin{align*}
f_{t,m}(x) = \frac{q^{-\frac{t}{4}}}{(q)^3_{\infty}} \sum_{k\operatorname{mod} 2t}(-1)^k q^{\frac{(k+t)^2}{4t}}\theta_{t+k,m+k} \sum_{\substack{r,l \in \Z\\ l\equiv k \Mod {2t}}} \sg(r,l)(-1)^r q^{\frac{r^2}{2}+rl+\frac{2t-1}{4t}l^2+\frac{1}{2}r}x^{-r}.
\end{align*}
\end{theorem}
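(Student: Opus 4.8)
The plan is to start from the definition of $f_{t,m}(x)$ as $\frac{\Theta(x;q)}{(q)_\infty^3}g_{t,m}(x)$ and to decouple the indefinite double sum $g_{t,m}(x)$ into a ``diagonal'' theta-type part times a one-dimensional sum, at the cost of introducing a new summation variable. Concretely, in the double sum defining $g_{t,m}(x)$ the exponent $B_{t,m}(r,s)$ is a quadratic form in $(r,s)$ whose cross term is $\frac{4t-1}{4}rs$; the idea, following Mortenson--Zwegers, is to pass to new coordinates adapted to the geometry of this indefinite form. I would set one variable to track the ``anti-diagonal'' direction $r-s$ (or rather $\frac{r-s}{2}$, which is why the $r\equiv s\pmod 2$ condition is there) and split the other variable modulo $2t$, writing $s = l$ and letting $k$ be the residue of $l$ modulo $2t$; the factor $q^{(k+t)^2/4t}$ and the shift $C_{p,m}$ built into $\theta^*_{p,m}$ are exactly the bookkeeping needed to complete squares cleanly after this substitution.

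The key steps, in order, would be: (i) expand $\Theta(x;q)=\sum_{n\in\Z}(-1)^nq^{\binom n2}x^n$ against $g_{t,m}(x)$, so that $f_{t,m}(x)(q)_\infty^3$ becomes a triple sum over $n$ and the two indefinite-sum indices, with a $\frac{1}{1-xq^{(r+s)/2}}$ geometric-series factor; (ii) use the standard identity $\frac{1}{1-xq^{(r+s)/2}}\sum_{n}(-1)^nq^{\binom n2}x^{n+\cdot} = \Theta$-type partial-fraction manipulation to absorb the pole, i.e.\ rewrite the $n$-sum plus geometric factor as a sign function $\sg$ times a pure $q$-power $x$-series (this is the ``Appell to theta'' step that turns $g_{t,m}$ against $\Theta$ into an honest indefinite ternary theta series); (iii) change variables to separate the $r$-direction from the $(s=l)$-direction, complete the square in the variable paired with $l\bmod 2t$, and recognize the resulting inner quadratic form $\frac{r^2}{2}+rl+\frac{2t-1}{4t}l^2+\frac12 r$ together with its $\sg(r,l)(-1)^r x^{-r}$ weight; (iv) collect the leftover $l$-independent-mod-$2t$ part into $\theta_{t+k,m+k}$ by matching $B_{t+k,m+k}$ against the completed square, verifying that the extra exponent is precisely $-\frac t4 + \frac{(k+t)^2}{4t} + C_{t+k,m+k}$ contributions, which is where the somewhat mysterious constant $C_{p,m}$ earns its keep.

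The main obstacle, I expect, is step (iii)--(iv): getting the change of variables and the splitting modulo $2t$ to land exactly on the quadratic forms $B_{t+k,m+k}(r',s')$ and $\frac{r^2}{2}+rl+\frac{2t-1}{4t}l^2+\frac12 r$ simultaneously, with all the linear shifts $(4p-1-2m)$, $(1+2m)$ and the half-integer terms matching. The indefinite nature of the forms means one must be careful that the sign functions $\sg(r,s)$ transform correctly under the substitution --- naively changing variables in a conditionally convergent indefinite theta series can introduce spurious finite correction terms (the usual ``polar'' corrections in the Mortenson--Zwegers machinery), and one has to check either that these cancel in pairs under $k\mapsto$ some involution, or that they are genuinely absent because of the parity constraint $r\equiv s\pmod 2$ and the shape of $\sg(r,s)$. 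A secondary bookkeeping nuisance is the restriction $1\le m<t$: presumably the argument needs $m+k$ and $t+k$ to stay in a range where the defining $q$-series for $\theta^*_{p,m}$ converges/behaves, and the boundary case $m=t$ would require a separate (possibly degenerate) treatment, which is why it is excluded. Once the quadratic forms are matched, the identity should follow by comparing coefficients of $x^u$ on both sides, reducing everything to a finite collection of one-variable theta identities that can be checked directly.
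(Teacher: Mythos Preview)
Your proposal takes a genuinely different route from the paper's proof, and while the direct indefinite-theta-series manipulation you sketch is in principle plausible, the paper's argument sidesteps precisely the obstacles you flag as worrisome.

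The paper does \emph{not} expand $\Theta(x;q)$ against $g_{t,m}(x)$ and attempt a global change of variables in a ternary indefinite sum. Instead it first establishes a functional equation (Proposition~\ref{prop:funceqGF})
\[
f_{t,m}(qx) = -x^{2t-1}f_{t,m}(x) + (x^{2t-m-1}+x^{m}) - \frac{\Theta(x;q)}{(q)^3_{\infty}}\sum_{k=0}^{2t-1}x^{k-1}q^k \theta_{t+k,m+k},
\]
proved by computing a certain telescoping sum over $g_{t,m}$ in two ways. Since $f_{t,m}(x)$ is entire on $\C\smallsetminus\{0\}$, one writes its Laurent expansion in the form $f_{t,m}(x)=\sum_{r}(-1)^r q^{-\frac{r^2}{2(2t-1)}+\frac r2}a_r x^{-r}$, and the functional equation then becomes a first-order linear recurrence $a_r = a_{r+2t-1} + b_r + C_r$ with an explicit $b_r$ (a finite sum of $\theta_{t+k,m+k}$'s times Gaussian weights) and a $C_r$ supported on exactly two residues. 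A general lemma (Lemma~\ref{lemma:solverectool}) then solves such recurrences: under mild decay of $b_r$ and finite support of $C_r$, one has $a_r=\sum_{l\in\Z}\sg(r,l)\,b_{r+l(2t-1)}$. Substituting back and regrouping modulo $2t$ gives the stated formula directly, with the factor $q^{(k+t)^2/4t}$ emerging from completing the square in the one-dimensional exponent, not in a two-dimensional quadratic form.

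What this buys relative to your plan: the functional-equation route reduces everything to a \emph{one-dimensional} recurrence, so there is no need to track how $\sg(r,s)$ transforms under a two-variable substitution, and the ``polar corrections'' you worry about are replaced by the transparent condition $C_r=0$ outside $[-(2t-1),-1]$, which is exactly where $1\le m<t$ is used. Your step~(ii), the ``Appell to theta'' absorption of the pole against $\Theta(x;q)$, is doing implicitly what the paper's functional-equation derivation does explicitly; but your steps~(iii)--(iv) would require a careful simultaneous diagonalisation of an indefinite ternary form with two sign functions, and you have not identified the substitution that makes this work. The paper's approach trades that geometric difficulty for an elementary recurrence argument.
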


The proof of Theorem \ref{theorem:main} is presented in Section \ref{sect:proofmainth}. Prior to it the properties of indefinite binary theta series $\theta_{p,m}$ are developed in Section \ref{sect:propbinser} and the functional equation for $f_{t,m}$ is derived in Section \ref{sect:functeqF}. 

We will use Theorem \ref{theorem:main} to derive the modular properties of $(q)_{\infty}^3\mathcal{U}^{(m)}_{t}(x;q)$ in Section \ref{sect:applicationmainmt}.  We introduce the standard definition for Hecke-type double-sums:
\begin{equation}\label{eq:fabc}
f_{a,b,c}(x,y;q) := \sum_{r,s \in \Z} \sg(r,s) (-1)^{r+s} x^r y^s q^{a\binom{r}{2}+brs+c\binom{s}{2}}
\end{equation}
where $a$, $b$, $c$ are positive integers with $b^2-ac>0$.  In terms of the above definition, we obtain an interpretation of $\mathcal{U}^{(m)}_{t}(x;q)$ in terms of sums of binary products of Hecke-type sums.
\begin{corollary} \label{cor:main}
We have 
\begin{align*}
(q)^3_{\infty}&f_{t,m}(x) \\
&= \sum_{k=0}^{2t-1}(-1)^k q^{\binom{k}{2}+k} (f_{1,4t-1,1}(q^{2t+k-m}, q^{1+m+k}; q) + q^{2t+k}f_{1,4t-1,1}(q^{4t+k-m}, q^{2t+m+k+1}; q)) \\
&\qquad \times f_{1,2t,2t(2t-1)}(x^{-1}q^{k+1}, -q^{(k+t)(2t-1)}; q).
\end{align*}
\end{corollary}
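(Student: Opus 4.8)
The plan is to start from the formula in Theorem \ref{theorem:main} and convert each of the two $\theta$-type sums appearing there into the Hecke-type double-sum notation $f_{a,b,c}$ of \eqref{eq:fabc}. The first observation is that a series of the shape $\sum_{r\equiv s \Mod 2}\sg(r,s)(-1)^{(r-s)/2}q^{Q(r,s)}$ with a quadratic form $Q$ can be rewritten, after the substitution $r\mapsto r+s$ (or a similar unimodular change of variables that makes the parity condition disappear), as a sum $\sum_{r,s\in\Z}\sg(\ast,\ast)(-1)^{r+s}x^r y^s q^{a\binom r2+brs+c\binom s2}$ for suitable $a,b,c,x,y$; here $a=c=1$ and $b=4t-1$, which matches the hypothesis $b^2-ac=(4t-1)^2-1>0$. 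Applying this to $\theta_{t+k,m+k}$ (recall $B_{p,m}$ from Definition \ref{def:indbinthetaser}) produces the factor $f_{1,4t-1,1}$ evaluated at the two monomial arguments indicated, while the $q$-power $C_{t+k,m+k}$ hidden in $\theta^*$ is \emph{not} used — note the corollary is stated in terms of $\theta_{p,m}$, not $\theta^*_{p,m}$ — so one only needs to track $B_{t+k,m+k}(r,s)$ and complete the square to read off the shifts $q^{2t+k-m}$, $q^{1+m+k}$ and the second pair $q^{4t+k-m}$, $q^{2t+m+k+1}$ together with the separating power $q^{2t+k}$; the appearance of two terms reflects splitting the residue of $r$ (or $s$) modulo $2$ against the exponent $\tfrac{4p-1-2m}{4}r$, exactly as the two sub-sums in Theorem \ref{theorem:HTformulaforU} arise.

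Next I would treat the inner sum $\sum_{r,l\in\Z,\ l\equiv k \Mod{2t}}\sg(r,l)(-1)^r q^{\frac{r^2}{2}+rl+\frac{2t-1}{4t}l^2+\frac12 r}x^{-r}$. Writing $l=2t\ell+k$ absorbs the congruence condition and, after expanding $\tfrac{2t-1}{4t}(2t\ell+k)^2$ and collecting, turns the exponent into a quadratic form in $(r,\ell)$ with integer coefficients; the cross term $rl=2tr\ell+kr$ gives $b=2t$ and the $\ell^2$ coefficient becomes $t(2t-1)=2t(2t-1)/2$, i.e. $c=2t(2t-1)$, while $a=1$. This is precisely $f_{1,2t,2t(2t-1)}$, again with $b^2-ac=4t^2-2t(2t-1)=2t>0$, and completing the square identifies the arguments as $x^{-1}q^{k+1}$ and $-q^{(k+t)(2t-1)}$ (the minus sign coming from $(-1)^r$ versus $(-1)^{r+\ell}$, equivalently from a $q^{\binom\ell2}$-type adjustment). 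The overall prefactor $q^{-t/4}$ from Theorem \ref{theorem:main} combines with $q^{(k+t)^2/4t}$ and the leftover constant from completing both squares; one checks that the net $k$-dependent power is exactly $q^{\binom k2+k}$ and that all the $1/(4t)$-type fractions cancel, which is the small miracle that makes the corollary a genuine identity over $q$-series rather than $q^{1/4t}$-series. Multiplying through by $(q)^3_\infty$ removes the denominator from Theorem \ref{theorem:main}.

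The main obstacle, and the only place real care is needed, is the bookkeeping of the two completions of the square happening simultaneously and the verification that the $k$-dependent and $k$-independent fractional powers of $q$ assemble into the clean integer exponents $\binom k2+k$, $2t+k$, $(k+t)(2t-1)$, etc. In particular one must be sure the unimodular substitutions used to remove the parity/congruence constraints are genuinely unimodular, so that no spurious Jacobian-type factor or lattice-index factor is introduced, and that the sign function $\sg(r,s)$ transforms correctly under them — this is the standard subtlety with indefinite theta series, since $\sg$ is not linear. I would organize the computation by first recording a clean lemma of the form ``$\sum_{r\equiv s(2)}\sg(r,s)(-1)^{(r-s)/2}q^{B_{p,m}(r,s)} = f_{1,4t-1,1}(\cdot,\cdot;q) + q^{\cdot}f_{1,4t-1,1}(\cdot,\cdot;q)$'' and a companion lemma rewriting the inner sum, and then simply substituting both into Theorem \ref{theorem:main}; granting those two lemmas, the corollary is immediate. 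Everything else is routine manipulation of exponents.
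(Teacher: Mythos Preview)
Your approach is essentially the paper's. The paper records exactly the lemma you propose for $\theta_{p,m}$ as Proposition~\ref{prop:vpmintermsfabc}, obtained by the parity split $r\equiv s\equiv 0$ versus $r\equiv s\equiv 1\pmod 2$ (not by a unimodular substitution such as $r\mapsto r+s$, which does not carry $\sg(r,s)$ to $\sg$ of the new variables), and then substitutes it into identity~\eqref{eq:anotherformmain} --- the intermediate form in the proof of Theorem~\ref{theorem:main} \emph{before} the congruence on $l$ is introduced. Using~\eqref{eq:anotherformmain} directly rather than the final statement of Theorem~\ref{theorem:main} means the inner sum is already $f_{1,2t,2t(2t-1)}(x^{-1}q^{k+1},-q^{(k+t)(2t-1)};q)$ with the integer prefactor $q^{\binom{k}{2}+k}$ in place, so the completing-the-square and fractional-power bookkeeping you anticipate never arises.
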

One can then use \cite[Theorem 1.3]{HM} or \cite[Theorem 1.8, Corollary 4.2]{MZ} to evaluate the double-sums in terms of theta functions and Appell functions, where we use the following definition for an Appell function
\begin{equation}\label{eq:appell}
m(x,z;q):=\frac{1}{\Theta(z;q)}\sum_{r=-\infty}^{\infty}\frac{(-1)^rq^{\binom{r}{2}}z^r}{1-q^{r-1}xz}.
\end{equation}

In Section \ref{sect:anotherforF}, we use a different method to solve for the functional equation for $f_{t,m}(x)$, and we obtain another formula.
\begin{theorem}\label{theorem:newcalc} For $t\geq 2$ and $1\leq m<t$ we have
\[\begin{split}
f_{t,m}(x) = x^{-2t+m+1}& m(x^{-2t+1}  q^{m},-x^{2t-1}; q^{2t-1})  +\\+x^{-m}  & m(x^{-2t+1}q^{2t-1-m} ,-x^{2t-1};q^{2t-1})  \\
&- \frac{x^{-2t}}{\Theta(-x^{2t-1};q^{2t-1})(q)_{\infty}} \sum_{k=0}^{2t-1} x^k q^{k}\theta_{t+k,m+k} \\& \times\sum_{s=0}^{2t-1} (-1)^s q^{\binom{s}{2}}\Theta(q^{(2t-1)(t-s)}; q^{2t(2t-1)}) x^s  \Theta(q^sx^{2t};q^{2t})m(-q^{k}x^{-2t}, q^s x^{2t}; q^{2t}).
\end{split}\]
\end{theorem}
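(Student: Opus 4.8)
The plan is to solve, by a route different from that of Section~\ref{sect:proofmainth}, the functional equation for $f_{t,m}(x)$ established in Section~\ref{sect:functeqF}. That equation relates $f_{t,m}$ at $x$ and at a shift of $x$ by a power of $q$, up to an explicit monomial factor and a finite correction built from theta functions, and (once supplemented with the analytic data below) pins $f_{t,m}$ down inside a finite-dimensional space of theta functions. Section~\ref{sect:proofmainth} solves it one way, arriving at the formula of Theorem~\ref{theorem:main} in terms of the indefinite binary series $\theta_{p,m}$ of Definition~\ref{def:indbinthetaser}. Here I will solve it by a direct Appell-function ansatz: produce a particular solution written through the Appell functions $m(\,\cdot\,,\,\cdot\,;q^{2t-1})$ and $m(\,\cdot\,,\,\cdot\,;q^{2t})$, verify it satisfies the same functional equation, and then remove the residual theta ambiguity. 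Throughout I use that, by Remark~\ref{rem:UandF}, $f_{t,m}$ is a specialization of $\mathcal{U}$, hence single-valued in $x$, so the poles of the $m$-functions that appear on the right-hand side of Theorem~\ref{theorem:newcalc} must ultimately cancel.

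Concretely, I would first rewrite $g_{t,m}$ in the coordinates $u=(r+s)/2$, $v=(r-s)/2\in\Z$; then $B_{t,m}$ becomes the indefinite quadratic form $t(u^{2}+u)-\tfrac{2t-1}{2}v^{2}+\tfrac{2t-1-2m}{2}v$, the weight becomes $(-1)^{v}$ times $\sg(u+v)+\sg(u-v)$, and the pole is $1-xq^{u}$; this exhibits $g_{t,m}$ as a Hecke--Appell-type double sum with $u$ the pole direction. From the coefficients and linear part of $B_{t,m}$ one reads off that the natural main terms are the two Appell functions $x^{-2t+m+1}m(x^{-2t+1}q^{m},-x^{2t-1};q^{2t-1})$ and $x^{-m}m(x^{-2t+1}q^{2t-1-m},-x^{2t-1};q^{2t-1})$ (here $1\le m<t$ and $t\ge2$ keep both exponents $m$, $2t-1-m$ admissible), and that the correction must involve Appell functions of modulus $q^{2t}$. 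I would then assemble the full candidate right-hand side of Theorem~\ref{theorem:newcalc} and check it satisfies the functional equation for $f_{t,m}$, using only: the elliptic transformations of $m(x,z;q)$ in $x$ and the change-of-$z$ formula for $m$ coming from \eqref{eq:appell}; the relation $\Theta(qx;q)=-x^{-1}\Theta(x;q)$; and the quasi-periodicity and parity of $\theta_{p,m}$ developed in Section~\ref{sect:propbinser}, which control how $\theta_{t+k,m+k}$ transforms under $k\mapsto k+1$ and are exactly what makes the $k$-sum collapse as stated.

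Next comes the pole and residue analysis. Since $f_{t,m}$ is holomorphic away from the relevant $q$-powers, the poles of the Appell functions on the right-hand side---those from the factors $1-q^{n-1}xz$---must cancel; I would verify that the residues of the two pure $m$-terms cancel against those coming from $m(-q^{k}x^{-2t},q^{s}x^{2t};q^{2t})$ inside the correction. This computation is what pins down the precise shape of the correction: the prefactor $x^{-2t}/(\Theta(-x^{2t-1};q^{2t-1})(q)_\infty)$, the theta products $\Theta(q^{(2t-1)(t-s)};q^{2t(2t-1)})$ and $\Theta(q^{s}x^{2t};q^{2t})$, the weights $(-1)^{s}q^{\binom{s}{2}}$, and the normalizing $q$-power implicit in $\theta_{t+k,m+k}$ (the constant $C_{p,m}$). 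Once the right-hand side is holomorphic on $\C^{*}$ and satisfies the same functional equation as $f_{t,m}$, the difference lies in a finite-dimensional space of theta functions of the appropriate level, and evaluating both sides at one convenient value of $x$---equivalently, comparing a bounded number of Laurent coefficients, or specializing the already-proved Theorem~\ref{theorem:main}---forces the difference to vanish. As an internal consistency check one can also recover the same formula from Corollary~\ref{cor:main} by decomposing the sums $f_{1,2t,2t(2t-1)}$ and $f_{1,4t-1,1}$ there via \cite[Theorem 1.3]{HM} or \cite[Theorem 1.8, Corollary 4.2]{MZ}.

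The main obstacle is twofold. First, the indefinite-sum bookkeeping: one must organize the sign region $\sg(u+v)+\sg(u-v)$ so that "decoupling" the pole direction $u$ produces exactly $m(-q^{k}x^{-2t},q^{s}x^{2t};q^{2t})$ with its theta-product coefficients while leaving behind precisely the series $\theta_{t+k,m+k}$ of Definition~\ref{def:indbinthetaser}---with the correct index shift $k\mapsto m+k$ and normalization---rather than spurious theta quotients, and the boundary contributions that arise when the sign functions are completed must reassemble into the stated finite double sum. Second, and classically the delicate point in such Mortenson--Zwegers-type arguments, one must prove that the leftover theta function in the previous step is identically zero rather than merely small; this requires a well-chosen specialization (or appeal to Theorem~\ref{theorem:main}) together with a dimension count for the relevant space of theta functions. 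By contrast, the verification that the Appell ansatz solves the functional equation, though lengthy, is entirely mechanical.
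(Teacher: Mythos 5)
Your plan is a genuinely different route from the paper's, but as written it defers rather than performs the decisive steps, and it misses the one idea that makes the paper's proof short. The paper does not posit an ansatz and verify it: it sets $G(x)=V_t(x)f_{t,m}(x)$ with $V_t(x)=\Theta(-x^{2t-1};q^{2t-1})$, so that $V_t(qx)=x^{-2t+1}V_t(x)$ converts Proposition \ref{prop:funceqGF} into $G(qx)=-G(x)+(\text{an explicit Laurent series})$. On Laurent coefficients this reads $(1+q^d)a_d=(\text{explicit})$, which has a \emph{unique} solution since $1+q^d\neq 0$; the termwise division by $1+q^d$ is precisely what manufactures the Appell functions $m(\cdot,\cdot;q^{2t-1})$ and $m(\cdot,\cdot;q^{2t})$, and Proposition \ref{prop:formforY} (the $2t$-component decomposition of $Y_t(x)=V_t(x)\Theta(x;q)$) supplies the coefficients $(-1)^sq^{\binom{s}{2}}\Theta(q^{(2t-1)(t-s)};q^{2t(2t-1)})$. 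There is no residue analysis, no pole cancellation to check at this stage, and---crucially---no residual theta ambiguity to remove, because the homogeneous equation $h(qx)=-h(x)$ has no nonzero holomorphic solutions on $\C^*$.

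The gaps in your version are concrete. First, the three computations you describe (rewriting $g_{t,m}$ in the $u,v$ coordinates and decoupling the pole direction; verifying that your candidate satisfies the functional equation; the residue cancellation that is supposed to pin down the correction term) are each substantial and none is carried out, so the ``correction'' with prefactor $x^{-2t}/(\Theta(-x^{2t-1};q^{2t-1})(q)_\infty)$ is asserted rather than derived. Second, your mechanism for killing the leftover theta function is underspecified: two holomorphic solutions of $h(qx)=-x^{2t-1}h(x)+(\text{same inhomogeneity})$ differ by an element of a $(2t-1)$-dimensional space of theta functions, so ``evaluating at one convenient value of $x$'' does not suffice---you need $2t-1$ independent conditions, and matching against Theorem \ref{theorem:main} would itself require proving a nontrivial identity between two very different-looking expansions. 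Neither obstacle is fatal, but both disappear the moment you multiply through by $V_t(x)$ first, which is the step your proposal lacks.
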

\noindent Theorem \ref{theorem:newcalc} also gives an interpretation of $f_{t,m}(x)$ in terms of theta functions and Appell functions.

We also consider special case $\mathcal{U}(x;q)$ and study its modular properties. In Section \ref{sect:applicationmain11} we evaluate $\mathcal{U}(x;q)$ in terms of Hecke-type sums:
\begin{equation*}
(q)_{\infty}\mathcal{U}(-x;q) = f_{1,2,3}(x^{-1}q^2,q^3;q).
\end{equation*}
In Section \ref{sect:applicationnewf123} we apply Theorem \ref{theorem:newcalc} to find the Appell form and the ``mod theta'' congruence for $f_{1,2,3}(x^{-1}q^2,q^3;q)$.  In Section \ref{sect:nothetaid}, we find another expression for the mock modularity of $f_{1,2,3}(x^{-1}q^2,q^3;q)$ by using identities found in \cite{M1}.

\section{Properties of indefinite binary theta series}\label{sect:propbinser}
In this section we will consider properties of the theta functions $\theta_{p,m}$ and $\theta_{p,m}^*$ as in Definition \ref{def:indbinthetaser}.  

\begin{proposition}\label{prop:thetaproperties}
We have
\begin{enumerate}
    \item[\textnormal{(a)}] $\theta_{p,m} = q^{p+t}\theta_{p+2t,m+2t}$, $\theta_{p+2t,m+2t}^* = \theta_{p,m}^*$;
    \item[\textnormal{(b)}] $\theta_{p,m} = -q^{p-m-t}\theta_{p,m+2t-1}$, $\theta_{p,m}^* = -\theta_{p,m+2t-1}^*$;
    \item[\textnormal{(c)}] $\theta_{p,m} = -\theta_{-p,-m-1}$, $\theta_{p,m}^* =-\theta_{-p,-m-1}^*$;
    \item[\textnormal{(d)}] $\theta_{p,m} = \theta_{p,2p-m-1}$, $\theta_{p,m}^* = \theta_{p,2p-m-1}^*$.
\end{enumerate}
\end{proposition}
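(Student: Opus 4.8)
The plan is to obtain each of (a)--(d) from a linear change of the summation variables $(r,s)$ in the double sum defining $\theta_{p,m}$, tracking separately the exponent $B_{p,m}$, the sign $(-1)^{(r-s)/2}$ and the weight $\sg(r,s)$, and then to read off the starred identities from the behaviour of the normalising exponent $C_{p,m}$ under the same substitutions. Write $B_{p,m}(r,s)=Q(r,s)+\ell_{p,m}(r,s)$, with $Q(r,s)=\tfrac{r^2}{8}+\tfrac{4t-1}{4}rs+\tfrac{s^2}{8}$ the $(p,m)$-independent quadratic part (symmetric and even in $(r,s)$) and $\ell_{p,m}$ the linear part. For a translation $(r,s)\mapsto(r+\alpha,s+\beta)$ with $\alpha\equiv\beta\pmod 2$ (so that $r\equiv s\pmod 2$ is preserved) one has
\[
B_{p,m}(r+\alpha,s+\beta)=Q(r,s)+\bigl(\ell_{p,m}(r,s)+L_{\alpha,\beta}(r,s)\bigr)+\bigl(Q(\alpha,\beta)+\ell_{p,m}(\alpha,\beta)\bigr),
\]
where $L_{\alpha,\beta}(r,s)=\tfrac{\alpha+(4t-1)\beta}{4}r+\tfrac{\beta+(4t-1)\alpha}{4}s$. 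Now: passing from $(p,m)$ to $(p+2t,m+2t)$ changes $\ell_{p,m}$ by $t(r+s)$, which is cancelled by $L_{\alpha,\beta}$ with $\alpha=\beta=-1$; passing from $m$ to $m+2t-1$ changes $\ell_{p,m}$ by $\tfrac{2t-1}{2}(s-r)$, cancelled by $\alpha=-1,\ \beta=1$; for (c) one uses the reflection $(r,s)\mapsto(-r,-s)$ together with $\ell_{-p,-m-1}=-\ell_{p,m}$; for (d) the transposition $(r,s)\mapsto(s,r)$ together with $\ell_{p,2p-m-1}(s,r)=\ell_{p,m}(r,s)$. In each case the residual constant $Q(\alpha,\beta)+\ell(\alpha,\beta)$ is a one-line computation and reproduces exactly the displayed power of $q$, and the parity of $(-1)^{(r-s)/2}$ is unchanged except in (b), where $(-1)^{((r-1)-(s+1))/2}=-(-1)^{(r-s)/2}$ supplies the minus sign.

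The only delicate point is that $\sg(r,s)$ is exactly invariant only under the transposition of (d), so that identity follows at once, whereas the other three substitutions produce a boundary term. Writing $\sg(r,s)=\chi_{r\ge0}\chi_{s\ge0}-\chi_{r<0}\chi_{s<0}$, a short case analysis gives
\[
\sg(r-1,s-1)=\sg(r,s)-\chi_{r=0}-\chi_{s=0},\quad \sg(-r,-s)=-\sg(r,s)+\chi_{r=0}+\chi_{s=0},\quad \sg(r-1,s+1)=\sg(r,s)-\chi_{r=0}+\chi_{s=-1}.
\]
Substituting these back, in each of (a), (b), (c) the difference of the two sides of the asserted identity collapses to a finite combination of one-dimensional series supported on $\{r=0\}$, $\{s=0\}$ or $\{s=-1\}$. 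Setting the free variable equal to $2k$ or $2k+1$ and completing the square, every such series has the form $q^{\bullet}\,\Theta(q^{j};q)$ with $j\in\Z$ -- for instance the $\{r=0\}$-contribution to (a) and (c) is $-q^{-m}\Theta(q^{m};q)$, and the $\{s=0\}$-contribution is $\Theta(q^{2p-m};q)$. Since $\Theta(1;q)=(1;q)_{\infty}(q;q)_{\infty}^{2}=0$ and $\Theta(q^{j};q)=(-1)^{j}q^{-\binom{j}{2}}\Theta(1;q)$ by the elliptic transformation of $\Theta$ (itself immediate from the product formula), all these boundary terms vanish, and the unstarred identities follow.

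For the starred series, with $\theta^{*}_{p,m}=q^{C_{p,m}}\theta_{p,m}$ it is enough to track $C_{p,m}$. Under $(p,m)\mapsto(-p,-m-1)$ the two linear factors $2m+1$ and $2m+1-4p$ both change sign while $-2p^{2}$ is fixed, so $C_{-p,-m-1}=C_{p,m}$; under $m\mapsto 2p-m-1$ these two factors are interchanged up to sign and $p$ is unchanged, so $C_{p,2p-m-1}=C_{p,m}$; hence the starred forms of (c) and (d) are immediate from the unstarred ones. For (a) and (b) one checks directly the shift relations $C_{p+2t,m+2t}-C_{p,m}=p+t$ and $C_{p,m+2t-1}-C_{p,m}=p-m-t$ (in both cases the numerator simplifies, after expansion, to a multiple of $8t(2t-1)$), and combining these with the corresponding unstarred identities gives $\theta^{*}_{p+2t,m+2t}=\theta^{*}_{p,m}$ and $\theta^{*}_{p,m+2t-1}=-\theta^{*}_{p,m}$.

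The step I expect to be the main obstacle is the sign-function bookkeeping in parts (a), (b), (c): one has to make sure the chosen translations preserve the parity constraint $r\equiv s\pmod2$, pin down precisely the lattice lines on which $\sg$ jumps, and recognise the residual one-dimensional sums as theta nulls. The remaining work -- the quadratic-form computations and the elementary identities for $C_{p,m}$ -- is routine.
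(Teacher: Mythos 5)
Your argument is correct and follows essentially the same route as the paper: the same $\sg$-identities ($\sg(r-1,s-1)=\sg(r,s)-\delta_0(r)-\delta_0(s)$ and its analogues), the same shifts, reflection and transposition of $(r,s)$ matched against the quadratic form $B_{p,m}$, and the same direct computations of the shifts of $C_{p,m}$ for the starred identities. The only (immaterial) difference is how the boundary sums on $\{r=0\}$, $\{s=0\}$, $\{s=-1\}$ are killed: the paper isolates this as a lemma proved by a reflection $r\mapsto -r-2(4t-1)s-2(4p-1-2m)$ that preserves $B_{p,m}$ and flips the sign, whereas you complete the square and recognise them as theta nulls $\Theta(q^{j};q)=0$ --- equivalent and equally valid.
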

We first present a lemma which is helpful in proof of Proposition \ref{prop:thetaproperties}.
\begin{lemma}\label{lemma:thetaserieseq0}
For a fixed $s\in\Z$ we have
\begin{equation*}
\sum_{\substack{r \in \Z \\ r\equiv s \Mod 2}}(-1)^{\frac{r-s}{2}}q^{B_{p,m}(r,s)} = 0.
\end{equation*}
For a fixed $r\in\Z$ we have
\begin{equation*}
\sum_{\substack{s \in \Z \\ s\equiv r \Mod 2}}(-1)^{\frac{r-s}{2}}q^{B_{p,m}(r,s)} = 0.
\end{equation*}
\end{lemma}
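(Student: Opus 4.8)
The plan is to prove both vanishing statements in Lemma~\ref{lemma:thetaserieseq0} by exhibiting, for each fixed value of the non-summed variable, an involution on the index set that pairs every term with one of equal magnitude and opposite sign. Consider the first sum, with $s$ fixed. Viewed as a function of $r$, the exponent $B_{p,m}(r,s)$ is a quadratic polynomial in $r$ with leading coefficient $\tfrac18$, so $8B_{p,m}(r,s) = r^2 + 2(4t-1)sr + \cdots + (4p-1-2m)\cdot 2 r + \cdots$ is a quadratic in $r$ whose linear coefficient is an odd integer (the $r^2$ has coefficient $1$ and the cross term $(4t-1)s$ contributes a term of the same parity as $s$, while the $(4p-1-2m)$ term is odd); the key point is that $B_{p,m}(r,s)$ as a quadratic in $r$ is symmetric about a half-integer axis $r = r_0$ with $2r_0$ an odd integer. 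Writing $r \mapsto r' := 2r_0 - r$ gives $B_{p,m}(r',s) = B_{p,m}(r,s)$, and since $2r_0$ is odd, $r' \equiv s \pmod 2$ is preserved (because $r \equiv s$, and $r' = \text{odd} - r$ has the opposite parity to $r$ — wait, one must be careful here). Let me restate: the correct involution is $r \mapsto r'$ where $r + r' = 2r_0$; I would compute $2r_0$ explicitly from $B_{p,m}$, check it is an \emph{even} integer of a suitable residue so that $r \equiv r' \equiv s \pmod 2$, and then verify that the sign factor flips: $(-1)^{(r'-s)/2} = (-1)^{(2r_0 - r - s)/2} = (-1)^{r_0 - (r+s)/2}$, which equals $-(-1)^{(r-s)/2}$ provided $r_0 - (r-s)/2 - \text{(something)}$ is odd — i.e., provided the reflection shifts the half-period index by an odd amount. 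The arithmetic of matching the parity of $2r_0$ against the sign-flip condition is the crux; I expect to find that $B_{p,m}$ is a \emph{perfect square} up to an additive constant when restricted to the residue class $r \equiv s \pmod 2$, or more precisely that substituting $r = s + 2j$ turns the sum into $\sum_j (-1)^j q^{\alpha(j+\beta)^2 + \gamma}$ for constants $\alpha,\beta,\gamma$, which vanishes termwise under $j \mapsto -2\beta - j$ when $2\beta$ is an odd integer.

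So the concrete route I would take is: substitute $r = s + 2j$ (legitimate since $r \equiv s \pmod 2$), expand $B_{p,m}(s+2j, s)$ as a polynomial in $j$, and collect it into the form $a j^2 + b j + c$ where $a = \tfrac12$ (half-integer, from $\tfrac{(2j)^2}{8}$), and then complete the square to write $B_{p,m}(s+2j,s) = \tfrac12(j + \delta)^2 + (\text{const})$ where $\delta = b/(2a) = b$. The sum becomes $q^{\text{const}} \sum_{j\in\Z} (-1)^j q^{(j+\delta)^2/2}$. The involution $j \mapsto -2\delta - j =: j'$ (which is an integer iff $2\delta \in \Z$, and I expect $2\delta$ to turn out to be an \emph{odd} integer from the explicit coefficients) preserves $(j+\delta)^2 = (j'+\delta)^2$ and sends $(-1)^j$ to $(-1)^{-2\delta - j} = (-1)^{2\delta}(-1)^{-j} = -(-1)^j$ since $2\delta$ is odd. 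Hence the sum equals its own negative, so it vanishes. The second statement, with $r$ fixed, is handled identically after the substitution $s = r + 2j$, since $B_{p,m}(r,s)$ is also a quadratic in $s$ with leading coefficient $\tfrac18$.

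The main obstacle — really the only one — is the bookkeeping: verifying that after completing the square the quantity $2\delta$ is genuinely an odd integer, since that is exactly what makes the sign flip work and the summation index shift by an integer simultaneously. From $B_{p,m}(s+2j,s) = \tfrac{(s+2j)^2}{8} + \tfrac{4t-1}{4}(s+2j)s + \tfrac{s^2}{8} + \tfrac{4p-1-2m}{4}(s+2j) + \tfrac{1+2m}{4}s$, the coefficient of $j$ is $\tfrac{s}{2} + (4t-1)s + \tfrac{4p-1-2m}{2}$, so $2\delta = 2b = s + (8t-2)s + (4p-1-2m) = (8t-1)s + 4p - 2m - 1$, which is indeed odd for every $s \in \Z$. (For the second identity one gets $2\delta = (8t-1)r + 4p + 2m + 1$, again always odd.) I would therefore present the lemma's proof as: reduce to a one-variable geometric-type sum via $r = s+2j$, complete the square, observe the sign-reversing integer involution $j \mapsto -(8t-1)s - 4p + 2m + 1 - j$, conclude that the sum is zero; then remark that the second assertion follows by the symmetric computation in $s$.
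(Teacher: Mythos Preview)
Your approach is exactly that of the paper: a sign-reversing involution coming from the reflection symmetry of the quadratic $B_{p,m}(\cdot,s)$. The paper phrases it directly in the variable $r$ as $r \mapsto -r - 2(4t-1)s - 2(4p-1-2m)$; your substitution $r = s+2j$ simply conjugates this map to the involution $j \mapsto -2\delta - j$.

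There is, however, an arithmetic slip in your computation of the linear coefficient, and it actually matters. From $\tfrac{4t-1}{4}(s+2j)s$ the coefficient of $j$ is $\tfrac{(4t-1)s}{2}$, not $(4t-1)s$. Hence the coefficient of $j$ in $B_{p,m}(s+2j,s)$ is
\[
\frac{s}{2} + \frac{(4t-1)s}{2} + \frac{4p-1-2m}{2} \;=\; \frac{4ts + 4p - 1 - 2m}{2},
\]
so that $2\delta = 4ts + 4p - 1 - 2m$, which is indeed odd for every $s\in\Z$. Your stated value $2\delta = (8t-1)s + 4p - 2m - 1$ has the same parity as $s+1$ and is \emph{even} when $s$ is odd, so the sign-flip step fails as written for odd $s$. (Likewise, for the second identity the correct value is $2\delta = 4tr + 2m + 1$, always odd; your $(8t-1)r + 4p + 2m + 1$ is even when $r$ is odd and contains a spurious $4p$, since the $p$-term $\tfrac{4p-1-2m}{4}r$ does not involve $s$.) With the corrected $2\delta$ the proof goes through, and the involution $j \mapsto -(4ts+4p-1-2m) - j$ is precisely the paper's map $r \mapsto -r - 2(4t-1)s - 2(4p-1-2m)$ rewritten in the variable $j = (r-s)/2$.
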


Now we present the proofs of both Lemma \ref{lemma:thetaserieseq0} and Proposition \ref{prop:thetaproperties}: 
\begin{proof}[Proof of Lemma \ref{lemma:thetaserieseq0}]
To obtain the first identity it is needed to change the variable
\begin{equation*}
r \to -r-2(4t-1)s-2(4p-1-2m)
\end{equation*}
and apply the following property of the binary quadratic form $B_{p,m}(r,s)$:
\begin{equation*}
B_{p,m}(-r-2(4t-1)s-2(4p-1-2m),s) = B_{p,m}(r,s). \\
\end{equation*}
Similarly for the second identity, change
\begin{equation*}
s \to -s-2(4t-1)r-2(1+2m)
\end{equation*}
and use
\begin{equation*}
B_{p,m}(r,-s-2(4t-1)r-2(1+2m)) = B_{p,m}(r,s).\qedhere
\end{equation*}
\end{proof}
\begin{proof}[Proof of Proposition \ref{prop:thetaproperties}]
First we set the notation:
\begin{equation*}
\delta_m(r):=\begin{cases} 1 &\text{if}\ r=m,\\ 0 &\text{otherwise.}\end{cases}
\end{equation*}
To prove (a) we use the identity 
\begin{equation*}
\sg(r,s) = \sg(r-1,s-1)+\delta_0(r)+\delta_0(s).
\end{equation*}
So we have
\begin{align*}
\theta_{p,m} &= \sum_{r\equiv s \Mod 2} \sg(r,s) (-1)^{\frac{r-s}{2}}q^{B_{p,m}(r,s)}\\ &=\sum_{r\equiv s \Mod 2} \sg(r-1,s-1) (-1)^{\frac{r-s}{2}}q^{B_{p,m}(r,s)} \\
 &\qquad \qquad + \sum_{s\equiv 0 \Mod 2} (-1)^{\frac{-s}{2}}q^{B_{p,m}(0,s)} + \sum_{r\equiv 0 \Mod 2} (-1)^{\frac{r}{2}}q^{B_{p,m}(r,0)}.
\end{align*}
Using Lemma \ref{lemma:thetaserieseq0} and making the change of variables $(r,s) \to (r+1,s+1)$ yields
\begin{equation*}
\theta_{p,m} = \sum_{r\equiv s \Mod 2} \sg(r,s) (-1)^{\frac{r-s}{2}}q^{B_{p,m}(r+1,s+1)}.
\end{equation*}
Apply the property
\begin{equation} \label{eq:propertyBplus1}
B_{p,m}(r+1,s+1) = B_{p,m}(r,s)+tr+ts+t+p = B_{p+2t,m+2t}(r,s)+t+p
\end{equation}
to obtain (a) for the $\theta_{p,m}$. To obtain (a) for $\theta_{p,m}^*$ use the property
\begin{equation*}
C_{p+2t,m+2t} = C_{p,m} + t + p.
\end{equation*}
Other identities are proved similarly. For (b) use
\begin{equation*}
\sg(r,s)=\sg(r-1,s+1)+\delta_0(r)-\delta_0(s+1)
\end{equation*}
and properties
\begin{gather*}
B_{p,m}(r+1,s-1) = B_{p,m}(r,s)-\frac{2t-1}{2}r+\frac{2t-1}{2}s+p-m-t = B_{p,m+2t-1}(r,s)+p-m-t,\\
C_{p,m} = C_{p,m+2t-1} + m-p+t.
\end{gather*}
For (c) use 
\begin{equation*}
\sg(-r,-s) = -\sg(r,s)+\delta_0(r)+\delta_0(s)
\end{equation*}
as well as
\begin{gather*}
B_{p,m}(-r,-s) = B_{-p,-m-1}(r,s),\\
C_{-p,-m-1} = C_{p,m}.
\end{gather*}
For (d) use 
\begin{equation*}
\sg(r,s)=\sg(s,r)
\end{equation*}
as well as
\begin{gather*}
B_{p,m}(r,s) = B_{p, 2p-m-1}(s,r),\\
C_{p,m} = C_{p,2p-m-1}.\qedhere
\end{gather*}
\end{proof}

\section{Functional equation for $f_{t,m}(x)$}\label{sect:functeqF}
In this section we derive the functional equation for $f_{t,m}(x)$ which is used in the proof of Theorem \ref{theorem:main} and in the proof of Theorem \ref{theorem:newcalc}.
\begin{proposition}\label{prop:funceqGF}
We have
\[ f_{t,m}(qx) = -x^{2t-1}f_{t,m}(x) + (x^{2t-m-1}+x^{m}) - \frac{\Theta(x;q)}{(q)^3_{\infty}}\sum_{k=0}^{2t-1}x^{k-1}q^k \theta_{t+k,m+k}. \]
\end{proposition}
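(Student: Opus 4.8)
The plan is to reduce the functional equation for $f_{t,m}$ to one for $g_{t,m}$ and then to establish the latter by a single reindexing of the defining double series together with one classical partial fraction identity. Since $\Theta(qx;q)=\sum_{n}(-1)^nq^{\binom{n}{2}+n}x^n=-x^{-1}\Theta(x;q)$, the relation $f_{t,m}(x)=\Theta(x;q)g_{t,m}(x)/(q)^3_\infty$ yields $f_{t,m}(qx)=-x^{-1}\Theta(x;q)g_{t,m}(qx)/(q)^3_\infty$, and clearing the common $\Theta$-factor shows that the Proposition is equivalent to
\[
g_{t,m}(qx)=x^{2t}g_{t,m}(x)+\sum_{k=0}^{2t-1}x^kq^k\,\theta_{t+k,m+k}-\frac{(q)^3_\infty}{\Theta(x;q)}\bigl(x^{2t-m}+x^{m+1}\bigr),
\]
so the whole problem rests on this identity for $g_{t,m}$.

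To prove it I would reindex the double sum defining $g_{t,m}(qx)$ by $(r,s)\mapsto(r-1,s-1)$, which preserves $r\equiv s\Mod 2$ and turns the denominator $1-xq^{\frac{r+s}{2}+1}$ into $1-xq^{\frac{r+s}{2}}$. Using $\sg(r,s)=\sg(r-1,s-1)+\delta_0(r)+\delta_0(s)$ from the proof of Proposition \ref{prop:thetaproperties} and the consequence of \eqref{eq:propertyBplus1} that $B_{t,m}(r-1,s-1)=B_{t,m}(r,s)-t(r+s)$, one gets
\[
g_{t,m}(qx)=\sum_{r\equiv s\Mod 2}\bigl(\sg(r,s)-\delta_0(r)-\delta_0(s)\bigr)\frac{(-1)^{\frac{r-s}{2}}q^{B_{t,m}(r,s)-t(r+s)}}{1-xq^{\frac{r+s}{2}}}.
\]
Subtracting $x^{2t}g_{t,m}(x)$ I split the outcome into its $\sg(r,s)$-part and its two $\delta_0$-parts. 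In the $\sg$-part, setting $N=\tfrac{r+s}{2}$, the factor $\dfrac{q^{-2tN}-x^{2t}}{1-xq^{N}}=q^{-2tN}\dfrac{1-(xq^{N})^{2t}}{1-xq^{N}}=\sum_{k=0}^{2t-1}x^kq^{(k-2t)N}$ collapses to a finite geometric sum, and the quadratic-form identity $B_{t,m}(r,s)+\tfrac{k-2t}{2}(r+s)=B_{k-t,\,m+k-2t}(r,s)$ identifies the $(r,s)$-sum attached to each $k$ with $\theta_{k-t,m+k-2t}$, which Proposition \ref{prop:thetaproperties}(a) rewrites as $q^{k}\theta_{t+k,m+k}$; this gives precisely $\sum_{k=0}^{2t-1}x^kq^k\theta_{t+k,m+k}$. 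In the $\delta_0$-parts, putting $r=0$, $s=2j$ (respectively $s=0$, $r=2j$) and simplifying the exponent $B_{t,m}(r,s)-t(r+s)$ reduces $-\sum(\delta_0(r)+\delta_0(s))(\cdots)$ to $-\sum_{j\in\Z}(-1)^jq^{\binom{j}{2}+(m-2t+1)j}/(1-xq^j)-\sum_{j\in\Z}(-1)^jq^{\binom{j}{2}-mj}/(1-xq^j)$.

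To evaluate the boundary sums I would use the classical partial fraction expansion
\[
\sum_{j\in\Z}\frac{(-1)^jq^{\binom{j}{2}+aj}}{1-xq^j}=\frac{x^{1-a}(q)^3_\infty}{\Theta(x;q)}\qquad(a\in\Z),
\]
which, after the shift $j\mapsto j-a$, reduces to the case $a=0$; that case can be proved by a Liouville-type argument, since both sides are meromorphic on $\C\smallsetminus\{0\}$ with the same simple poles and residues along $q^{\Z}$ and both satisfy $\varphi(qx)=-qx\,\varphi(x)$, so their difference is holomorphic on $\C\smallsetminus\{0\}$ with this functional equation, forcing its Laurent coefficients (which must satisfy $d_n=-q^{1-n}d_{n-1}$) to grow too fast for convergence unless all of them vanish. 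Taking $a=m-2t+1$ and $a=-m$, the two boundary sums become $x^{2t-m}(q)^3_\infty/\Theta(x;q)$ and $x^{m+1}(q)^3_\infty/\Theta(x;q)$, so the $\delta_0$-contribution is exactly the term $-(q)^3_\infty(x^{2t-m}+x^{m+1})/\Theta(x;q)$ in the displayed identity for $g_{t,m}$. Substituting this identity back into $f_{t,m}(qx)=-x^{-1}\Theta(x;q)g_{t,m}(qx)/(q)^3_\infty$ then produces the stated functional equation. I expect the essential work to be, first, making the reindexing airtight — in particular confirming that the rows $r=0$ and $s=0$ are extracted exactly once and with the right signs, which is precisely what the $\sg$-shift identity and the convention $\sg(0)=+1$ deliver — and, second, the partial fraction identity above: if one prefers to cite rather than reprove it, one must still track the annulus $|q|<|x|<1$ on which $g_{t,m}(x)$ and these Lambert-type series converge, the final equality then holding between meromorphic functions by analytic continuation.
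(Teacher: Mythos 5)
Your proposal is correct and is essentially the paper's own argument, merely repackaged: the paper evaluates the auxiliary sum $\sum \sg(r,s)(-1)^{(r-s)/2}q^{B_{t,m}(r,s)}\frac{1-x^{2t}q^{t(r+s+2)}}{1-xq^{(r+s+2)/2}}$ in two ways, which amounts to exactly your combination of the shift $(r,s)\mapsto(r-1,s-1)$ with the $\sg$-identity, the finite geometric factorization producing $\sum_k x^kq^k\theta_{t+k,m+k}$, and the Lambert-series partial-fraction identity for the $\delta_0$-boundary terms. All of your intermediate identities (the exponent shift $B_{t,m}(r-1,s-1)=B_{t,m}(r,s)-t(r+s)$, the reduction $\theta_{k-t,m+k-2t}=q^k\theta_{t+k,m+k}$ via Proposition \ref{prop:thetaproperties}(a), and the two specializations $a=m-2t+1$, $a=-m$) check out.
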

\begin{proof}[Proof of Proposition \ref{prop:funceqGF}]
Let us consider
\begin{equation} \label{eq:specialsumforprooffunceq}
\sum_{r\equiv s \Mod 2}\sg(r,s) (-1)^{\frac{r-s}{2}}q^{\frac{r^2}{8}+\frac{4t-1}{4}rs+\frac{s^2}{8}+\frac{4t-1-2m}{4}r+\frac{1+2m}{4}s} \frac{1-x^{2t}q^{t(r+s+2)}}{1-xq^{\frac{r+s+2}{2}}}.
\end{equation}
Let us calculate this sum in two ways. The first way is to open the brackets of the last factor:
\begin{align*}
g_{t,m}(qx) & - x^{2t}q^{2t} \sum_{r\equiv s \Mod 2}\sg(r,s) \frac{(-1)^{\frac{r-s}{2}}q^{\frac{r^2}{8}+\frac{4t-1}{4}rs+\frac{s^2}{8}+\frac{4t-1-2m}{4}r+\frac{1+2m}{4}s+tr+ts}}{1-xq^{\frac{r+s+2}{2}}}\\
 &\qquad = g_{t,m}(qx) - x^{2t}q^{2t} \sum_{r\equiv s \Mod 2}\sg(r,s) \frac{(-1)^{\frac{r-s}{2}}q^{B_{t,m}(r,s)+tr+ts}}{1-xq^{\frac{r+s+2}{2}}}.
\end{align*}
Change the variables $(r,s) \to (r-1,s-1)$ and recall property (\ref{eq:propertyBplus1}) from the previous section:
\begin{align*}
g_{t,m}(qx) &- x^{2t} \sum_{r\equiv s \Mod 2}\sg(r-1,s-1) \frac{(-1)^{\frac{r-s}{2}}q^{B_{t,m}(r,s)}}{1-xq^{\frac{r+s}{2}}}\\
&\qquad =
g_{t,m}(qx) - x^{2t} \sum_{r\equiv s \Mod 2}\sg(r-1,s-1) \frac{(-1)^{\frac{r-s}{2}}q^{\frac{r^2}{8}+\frac{4t-1}{4}rs+\frac{s^2}{8}+\frac{4t-1-2m}{4}r+\frac{1+2m}{4}s}}{1-xq^{\frac{r+s}{2}}}.
\end{align*}
Because $\sg(r-1,s-1) = \sg(r,s)-\delta_0(r)-\delta_0(s)$, we obtain
\begin{align*}
g_{t,m}(qx) &- x^{2t} \left(g_{t,m}(x) - \sum_{s \equiv 0 \Mod 2} \frac{(-1)^{\frac{-s}{2}}q^{\frac{s^2}{8}+\frac{1+2m}{4}s}}{1-xq^{\frac{s}{2}}}  - \sum_{r \equiv 0 \Mod 2} \frac{(-1)^{\frac{r}{2}}q^{\frac{r^2}{8}+\frac{4t-1-2m}{4}r}}{1-xq^{\frac{r}{2}}} \right) \\
&\qquad = g_{t,m}(qx) - x^{2t} \left(g_{t,m}(x) - \sum_{s \in \Z} \frac{(-1)^{s}q^{\frac{s^2}{2}+(m+\frac{1}{2})s}}{1-xq^{s}}  - \sum_{r \in \Z} \frac{(-1)^{r}q^{\frac{r^2}{2}+(2t-1-m+\frac{1}{2})r}}{1-xq^{r}} \right).
\end{align*}
We know the formula
\begin{equation*} 
\sum_{k \in \Z} \frac{(-1)^k q^{\frac{k^2}{2}+(n+\frac{1}{2})k}}{1-xq^k} = \frac{(q)^3_{\infty}}{x^n \Theta(x;q)}.
\end{equation*}
So we find
\begin{equation*}
g_{t,m}(qx) - x^{2t} \left(g_{t,m}(x) - \frac{(q)^3_{\infty}}{x^{m} \Theta(x;q)}  - \frac{(q)^3_{\infty}}{x^{2t-m-1} \Theta(x;q)} \right).
\end{equation*}

The second way to consider (\ref{eq:specialsumforprooffunceq}) is to insert
\begin{equation*}
\frac{1-x^{2t}q^{t(r+s+2)}}{1-xq^{\frac{r+s+2}{2}}} = \sum_{k=0}^{2t-1} x^kq^{\frac{r+s+2}{2} k}.
\end{equation*}
Hence
\begin{align*}
\sum_{r\equiv s \Mod 2}&\sg(r,s) (-1)^{\frac{r-s}{2}}q^{\frac{r^2}{8}+\frac{4t-1}{4}rs+\frac{s^2}{8}+\frac{4t-1-2m}{4}r+\frac{1+2m}{4}s} \sum_{k=0}^{2t-1} x^kq^{\frac{r+s+2}{2} k}\\
&= \sum_{k=0}^{2t-1} x^k q^k \sum_{r\equiv s \Mod 2}\sg(r,s) (-1)^{\frac{r-s}{2}}q^{\frac{r^2}{8}+\frac{4t-1}{4}rs+\frac{s^2}{8}+\frac{4t-1-2m+2k}{4}r+\frac{1+2m+2k}{4}s}\\
&= \sum_{k=0}^{2t-1} x^k q^k \theta_{t+k, m+k}.
\end{align*}
The result is the functional equation for $g_{t,m}(x)$:
\begin{equation*}
\sum_{k=0}^{2t-1} x^k q^k \theta_{t+k, m+k} = g_{t,m}(qx) - x^{2t} \left(g_{t,m}(x) - x^{-m}\frac{(q)^3_{\infty}}{\Theta(x;q)}  - x^{-2t+m+1}\frac{(q)^3_{\infty}}{ \Theta(x;q)} \right).
\end{equation*}
To obtain the functional equation for $f_{t,m}(x)$ we need to multiply the above by $-\frac{\Theta(x;q)}{x(q)^3_{\infty}} = \frac{\Theta(qx;q)}{(q)^3_{\infty}}$ and rearrange terms
\begin{equation*}
f_{t,m}(qx) = -x^{2t-1}f_{t,m}(x) + (x^{2t-m-1}+x^{m}) - \frac{\Theta(x;q)}{(q)^3_{\infty}}\sum_{k=0}^{2t-1}x^{k-1}q^k \theta_{t+k,m+k}.\qedhere
\end{equation*}
\end{proof}

\begin{example}
Let us consider the case $t=1$ and arbitrary $m \in \Z$. Using properties from Proposition \ref{prop:thetaproperties} we can obtain $\theta_{1,m}=0$ and $\theta_{2,m+1}=0$. Thus, the functional equation for $f_{t,m}(x)$ takes the form:
\begin{equation}
f_{1,m}(qx) = -x f_{1,m}(x) + x^{1-m}+x^{m}.\label{equation:f-fnq}
\end{equation}
As $f_{1,m}(x)$ has no poles, we can calculate the solution of this equation by considering the expansion of $f_{1,m}(x)$ into the following modified Laurent series:
\begin{equation}
f_{1,m}(x) = \sum_{r \in \Z}(-1)^r q^{-\binom{r}{2} - r}a_r x^r.
\label{equation:f-laurent}
\end{equation}
We substitute the expansion (\ref{equation:f-laurent}) into the functional equation (\ref{equation:f-fnq}). We can equate the coefficients of $x^r$ on both sides and divide them by $(-1)^r q^{-\binom{r}{2}}$.
\begin{equation*}
a_r = a_{r-1} + (-1)^r q^{\binom{r}{2}} \delta_{1-m}(r) + (-1)^r q^{\binom{r}{2}} \delta_{m}(r).
\end{equation*}
Thus, we have
\begin{equation*}a_r = 
\begin{cases}
   (-1)^{m+1} q^{\binom{m}{2}} & \textup{if} \ r \in \{1-m, m-1\},\\
   0 & \textnormal{otherwise}.
\end{cases}
\end{equation*}
We conclude that
\begin{equation*}
f_{t,m}(x) = (-1)^{m+1} q^{\binom{m}{2}}\sum_{r = 1-m}^{m-1}(-1)^r q^{-\binom{r}{2} - r} x^r.
\end{equation*}
\end{example}

\section{Proof of Theorem \ref{theorem:main}}\label{sect:proofmainth}
Now we prove Theorem \ref{theorem:main} by using the functional equation for $f_{t,m}(x)$. As a main tool for doing it we will use the following useful lemma:
\begin{lemma}\label{lemma:solverectool}
Let $(a_r)_{r \in \Z}$ be a sequence such that
\[ a_r-a_{r+d} = b_r + C_r, \]
with $|b_r| < Aq^{B|r|}$ $\forall r \in \Z$ with some positive $A,B \in \mathbb{R}$ and $C_r=0$ if $r \notin [-d, -1]$.
Then \[ a_r = \sum_{l \in \Z} \sg(r,l)b_{r+ld}. \]
\end{lemma}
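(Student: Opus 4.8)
The plan is to exhibit the claimed formula as the unique solution of the recurrence that decays at $\pm\infty$. Write $S_r:=\sum_{l\in\Z}\sg(r,l)b_{r+ld}$. The first step is to check that $S_r$ makes sense: since $\sg(r,l)=0$ unless $\sg(r)=\sg(l)$, and equals $\sg(r)$ otherwise, one has $S_r=\sum_{l\ge 0}b_{r+ld}$ for $r\ge 0$ and $S_r=-\sum_{l\le -1}b_{r+ld}$ for $r<0$; the geometric bound $|b_r|<Aq^{B|r|}$ with $|q|<1$ makes each of these absolutely convergent, and the same estimate yields $|S_r|\le A'q^{B|r|}$ for a suitable $A'$, so $S_r\to 0$ as $|r|\to\infty$.

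The second step is to verify that $S$ satisfies a recurrence of the prescribed shape. Writing $\sg(r,l)=\tfrac{1}{2}(\sg(r)+\sg(l))$ and separating cases according to the signs of $r$ and $r+d$, a shift of the summation index $l$ followed by cancellation gives $S_r-S_{r+d}=b_r$ whenever $r\ge 0$ or $r\le -d-1$; for the remaining residue-class representatives, namely $r\in[-d,-1]$ (equivalently, $r<0\le r+d$), the same computation instead produces $S_r-S_{r+d}=b_r-\sum_{l\in\Z}b_{r+ld}$. Hence $S_r-S_{r+d}=b_r+\widetilde{C}_r$ with $\widetilde{C}_r$ supported on $[-d,-1]$, precisely the form postulated for $(a_r)$.

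The third step is the comparison. Put $e_r:=a_r-S_r$, so that $e_r-e_{r+d}=C_r-\widetilde{C}_r$ vanishes outside $[-d,-1]$; in particular $e_r=e_{r+d}$ for every $r\ge 0$, so $e$ is constant along each residue class modulo $d$ on $\{r\ge 0\}$. Here I would use that $a_r\to 0$ as $|r|\to\infty$ — an input implicit in the intended setting, where the $a_r$ are the coefficients in a modified Laurent expansion (as in the Example) of a function holomorphic on an annulus, hence decaying super-exponentially. Since $S_r\to 0$ as well, $e_r\to 0$, forcing $e_r=0$ for all $r\ge 0$. Reading the relation on $[-d,-1]$ then gives $e_r=C_r-\widetilde{C}_r$ there, and $e_r=e_{r+d}$ for $r\le -d-1$ propagates this value along residue classes to all $r\le -1$; using $e_r\to 0$ as $r\to-\infty$ forces $e_r=0$ for $r\le -1$ too (and, incidentally, pins down $C_r=\widetilde{C}_r=-\sum_{l\in\Z}b_{r+ld}$ on $[-d,-1]$). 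Therefore $e\equiv 0$, i.e. $a_r=S_r$, as claimed.

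The main obstacle is the uniqueness in the third step: the recurrence alone leaves a $d$-parameter family of solutions on each half-line, so the decay of $(a_r)$ — not just of $(b_r)$ — is genuinely needed, and one should either add this mild hypothesis or appeal to the context in which the lemma is applied. The second delicate point is the bookkeeping on the transition window $[-d,-1]$, where the piecewise nature of $\sg(r,l)$ interacts with the shift $r\mapsto r+d$. The convergence estimate in the first step and the telescoping in the second are otherwise entirely routine.
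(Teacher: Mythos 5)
Your proof is correct and follows essentially the same route as the paper's: define the candidate $\widehat a_r=\sum_{l\in\Z}\sg(r,l)b_{r+ld}$, check that it satisfies a recurrence of the same shape as $a_r$ (your case split on the signs of $r$ and $r+d$ is just an unpacked version of the paper's identity $\sg(r,l)-\sg(r+d,l-1)=\delta_0(l)-\delta_0(r+1)-\cdots-\delta_0(r+d)$), and conclude because the difference is $d$-periodic along residue classes and tends to $0$. Your observation that the decay of $(a_r)$ itself is a genuinely needed extra hypothesis is well taken: the paper asserts that $\lim_{r\to\pm\infty}a_r=0$ ``can be derived from the recursion identity for $a_r$ and the lemma condition on $b_r$,'' but the recursion only yields convergence of $a_r$ along each residue class mod $d$, not vanishing of the limits, which in fact comes from the application (where the $a_r$ are normalized Laurent coefficients of a function holomorphic on $\C\smallsetminus\{0\}$).
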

\begin{proof}[Proof of Lemma \ref{lemma:solverectool}]
Define the following sequence:
\begin{equation*}
\widehat{a}_r = \sum_{l \in \Z} \sg(r,l)b_{r+ld}.
\end{equation*}
Then we have
\begin{align*}
\widehat{a}_r - \widehat{a}_{r+d} &= \sum_{l \in \Z} (\sg(r,l)-\sg(r+d,l-1))b_{r+ld} \\
&= \sum_{l \in \Z}(\delta_0(l)-\delta_0(r+1)-\cdots-\delta_0(r+d))b_{r+ld}\\
&=  b_r - (\delta_0(r+1)+\cdots+\delta_0(r+d))\sum_{n \equiv r \Mod d}b_n.
\end{align*}
Note that $\lim_{r \rightarrow \pm \infty}a_r = 0$. This fact can be derived from the recursion identity for $a_r$ and the lemma condition on $b_r$. From this property we have
\begin{equation*}
\sum_{n \equiv r \Mod d}b_n = \sum_{n \equiv r \Mod d} a_n-a_{n+d} - C_n  = -\sum_{n \equiv r \Mod d} C_n.
\end{equation*}
Because $C_r=0$ if $r \notin [-d, -1]$ we have
\begin{equation*}
\widehat{a}_r - \widehat{a}_{r+d} =  b_r + (\delta_0(r+1)+\cdots+\delta_0(r+d))\sum_{n \equiv r \Mod d} C_n = b_r+C_r.
\end{equation*}
We see that $\widehat{a}_r$ satisfied the same recurrence relation as $a_r$.  Hence the expression $\widehat{a}_r-a_r$ is $d$-periodic.  Also we can notice that $\lim_{r \rightarrow \infty}\widehat{a}_r = 0$. From these facts we obtain $\widehat{a}_r-a_r = 0$.
\end{proof}

\begin{proof}[Proof of Theorem \ref{theorem:main}]
As $f_{t,m}(x)$ does not have poles,  consider the Laurent series in $x\in \C \smallsetminus \{0\}$ for $f_{t,m}(x)$ in the special form:
\begin{equation*}
f_{t,m}(x) = \sum_{r \in \Z} (-1)^r q^{-\frac{1}{2(2t-1)}r^2 +\frac{1}{2}r} a_r x^{-r}.
\end{equation*}
Recall Proposition \ref{prop:funceqGF} and the definition of the theta function:
\begin{align*}
f_{t,m}(qx) &= -x^{2t-1}f_{t,m}(x) + (x^{2t-m-1}+x^{m}) - \frac{\Theta(x;q)}{(q)^3_{\infty}}\sum_{k=0}^{2t-1}x^{k-1}q^k \theta_{t+k,m+k}\\&= -x^{2t-1}f_{t,m}(x) + (x^{2t-m-1}+x^{m}) - \frac{1}{(q)^3_{\infty}}\sum_{l\in \Z} (-1)^l q^{\binom{l}{2}} x^l \sum_{k=0}^{2t-1}x^{k-1}q^k \theta_{t+k,m+k}.
\end{align*} 
Making the change of variable $l \to l -k+1$ produces
\begin{equation*}
f_{t,m}(qx) = -x^{2t-1}f_{t,m}(x) + (x^{2t-m-1}+x^{m}) - \frac{1}{(q)^3_{\infty}}\sum_{l\in \Z} \sum_{k=0}^{2t-1} (-1)^{l-k+1} q^{\binom{l-k+1}{2}+k} \theta_{t+k,m+k} x^l.
\end{equation*}
Using the identity
\begin{equation*}
\binom{a-b}{2} = \binom{a}{2} - ab + \binom{b+1}{2},
\end{equation*}
we conclude
\begin{equation*}
f_{t,m}(qx) = -x^{2t-1}f_{t,m}(x) + (x^{2t-m-1}+x^{m}) + \frac{1}{(q)^3_{\infty}}\sum_{l\in \Z} (-1)^{l} q^{\binom{l}{2}} \sum_{k=0}^{2t-1} (-1)^{k} q^{\binom{k}{2}+k-l(k-1)} \theta_{t+k,m+k} x^l.
\end{equation*}
Considering the coefficient of $x^{-r}$ of both sides yields
\begin{equation*}
a_r = a_{r+2t-1} + b_r + C_r,
\end{equation*}
\begin{equation}\label{eq:seqb}
b_r =  q^{\frac{1}{2(2t-1)}r^2 + \frac{1}{2}r + \binom{r+1}{2}} \frac{1}{(q)^3_{\infty}} \sum_{k=0}^{2t-1} (-1)^{k} q^{\binom{k}{2}+k+r(k-1)} \theta_{t+k,m+k},
\end{equation}
\begin{equation}\label{eq:seqc}
C_r = \begin{cases}
   (-1)^{r} q^{\frac{1}{2(2t-1)}r^2 + \frac{1}{2}r}, & r \in \{-m, -2t+m+1\}\\
   0, & r \notin \{-m, -2t+m+1\}.
\end{cases}
\end{equation}
From here we can see that we are able to apply Lemma \ref{lemma:solverectool} with $d=2t-1$ as $b_r$ and $C_r$ satisfy the lemma conditions. Hence we have
\begin{equation*}
a_r = \sum_{l \in \Z} \sg(r, l)b_{r+l(2t-1)}.
\end{equation*}
Using the definitions of $b_r$, we obtain
\begin{align*}
f_{t,m}(x) &= \sum_{r,l \in \Z}\sg(r,l)(-1)^r q^{-\frac{1}{2(2t-1)}r^2+\frac{1}{2}r}x^{-r}b_{r+l(2t-1)}\\
&= \sum_{r,l \in \Z}\sg(r,l)(-1)^r q^{-\frac{1}{2(2t-1)}r^2+\frac{1}{2}r} 
\times q^{\frac{1}{2(2t-1)}(r+l(2t-1))^2 + \frac{1}{2}(r+l(2t-1)) + \binom{r+l(2t-1)+1}{2}}\\
&\qquad  \times \frac{1}{(q)^3_{\infty}} \sum_{k=0}^{2t-1} (-1)^{k} q^{\binom{k}{2}+k+(r+l(2t-1))(k-1)} \theta_{t+k,m+k}x^{-r}.
\end{align*}
Interchanging sums, we find
\begin{align}\label{eq:anotherformmain}
f_{t,m}(x) &= \frac{1}{(q)^3_{\infty}} \sum_{k=0}^{2t-1}(-1)^k q^{\binom{k}{2}+k}\theta_{t+k,m+k} \\
&\qquad \times \sum_{r,l \in \Z} \sg(r,l)(-1)^rq^{\binom{r}{2}+2trl+2t(2t-1)\binom{l}{2}+r(k+1)+l((k+t)(2t-1))}x^{-r}.
\notag
\end{align}
In conclusion, we have
\begin{align*}
f_{t,m}(x) &= \frac{1}{(q)^3_{\infty}} \sum_{k=0}^{2t-1}(-1)^k q^{\binom{k}{2}+k}\theta_{t+k,m+k} \sum_{r,l \in \Z} \sg(r,l)(-1)^r q^{\frac{r^2}{2}+2trl+2t(2t-1)\frac{l^2}{2}+r(k+\frac{1}{2})+l(2tk-k)}x^{-r}\\
&=
\frac{1}{(q)^3_{\infty}} \sum_{k=0}^{2t-1}(-1)^k q^{\binom{k}{2}+k-\frac{2t-1}{4t}k^2}\theta_{t+k,m+k} \sum_{r,l \in \Z} \sg(r,l)(-1)^r q^{\frac{r^2}{2}+r(2tl+k)+\frac{2t-1}{4t}(2tl+k)^2+\frac{1}{2}r}x^{-r}\\
&=
\frac{q^{-\frac{t}{4}}}{(q)^3_{\infty}} \sum_{k\operatorname{mod} 2t}(-1)^k q^{\frac{(k+t)^2}{4t}}\theta_{t+k,m+k} \sum_{\substack{r,l \in \Z\\ l\equiv k\Mod{2t}}} \sg(r,l)(-1)^r q^{\frac{r^2}{2}+rl+\frac{2t-1}{4t}l^2+\frac{1}{2}r}x^{-r}.\qedhere
\end{align*}
\end{proof}

\section{Proof of the Theorem \ref{theorem:newcalc}}\label{sect:anotherforF}
In this section we again obtain a closed expression for $f_{t,m}(x)$, but we use a different method. Firstly introduce modified theta functions $V_t(x)$ and $Y_t(x)$ and observe some of its properties.
\begin{definition} Let $t\in\N$ be fixed. We define
\begin{equation*}
V_{t}(x) := \Theta(-x^{2t-1};q^{2t-1}) = \sum_{r = -\infty}^{\infty} q^{(2t-1)\binom{r}{2}}x^{(2t-1)r}
\end{equation*}
and
\begin{equation*}
Y_{t}(x) := V_t(x)\Theta(x;q).
\end{equation*}
\end{definition}
\begin{remark}\label{rem:elltrVY} Note that $V_{t}(x)$ and $Y_{t}(x)$ have the following elliptic transformation properties: 
\begin{equation*}
V_{t}(qx) = x^{-2t+1}V_{t}(x) \ \ \text{and} \ \  Y_{t}(qx) = -x^{-2t}Y_{t}(x).
\end{equation*}
They can be directly obtained from the elliptic transformation property for the theta function
\begin{equation}\label{eq:elltrprop}
    \Theta(q^nx;q)=(-1)^{n}q^{-\binom{n}{2}}x^{-n}\Theta(x;q).
\end{equation}
\end{remark}

We can derive the following convenient form for the $Y_{t}(x)$.
\begin{proposition}\label{prop:formforY} We have
\begin{equation*}
Y_{t}(x) = \sum_{s=0}^{2t-1} (-1)^s q^{\binom{s}{2}}\Theta(q^{(2t-1)(t-s)}; q^{2t(2t-1)}) x^s \sum_{n = -\infty}^{\infty} q^{2t\binom{n}{2}} (-q^s x^{2t})^n.
\end{equation*}
\end{proposition}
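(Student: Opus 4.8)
The plan is to start from the definition $Y_t(x) = V_t(x)\Theta(x;q)$, expand the factor $\Theta(x;q)$ via its series $\sum_{u\in\Z}(-1)^u q^{\binom{u}{2}}x^u$, and then split the summation index $u$ according to its residue modulo $2t$. Writing $u = s + 2tn$ with $0\le s\le 2t-1$ and $n\in\Z$, I would use the elementary identity $\binom{s+2tn}{2} = \binom{s}{2} + 2tsn + 2t\binom{2tn}{2}/(2t)\cdot(\text{adjust})$; more carefully, $\binom{s+2tn}{2} = \binom{s}{2} + s\cdot 2tn + \binom{2tn}{2}$, and $\binom{2tn}{2} = 2t n(2tn-1)/2 = 2t^2n^2 - tn$. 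This lets me write $q^{\binom{s+2tn}{2}} = q^{\binom{s}{2}} q^{2tsn} q^{2t^2n^2 - tn}$, and since $2t\binom{n}{2} = 2t\cdot n(n-1)/2 = t n^2 - tn$, I can match $q^{2t^2n^2-tn} = q^{2t\binom{n}{2}}\cdot q^{2t^2n^2 - tn - tn^2 + tn} = q^{2t\binom{n}{2}}q^{(2t^2-t)n^2}$. Hmm — that exponent $(2t^2-t)n^2 = t(2t-1)n^2$ needs to be absorbed into the $V_t$ factor. So the cleaner route is to expand $V_t(x)$ as well.

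Concretely, I would instead expand both factors: $V_t(x) = \sum_{r\in\Z} q^{(2t-1)\binom{r}{2}}x^{(2t-1)r}$ and $\Theta(x;q) = \sum_{u\in\Z}(-1)^u q^{\binom{u}{2}}x^u$, giving a double sum over $(r,u)$. The total $x$-power is $(2t-1)r + u$, and I want to reorganize so that the $x$-exponent is $s + 2tn$ for $0\le s \le 2t-1$. Setting $(2t-1)r + u = s + 2tn$: given $r$, choosing $s \equiv u + (2t-1)r \pmod{2t}$ fixes $s$, and then $n$ is determined together with $r$. The substitution I expect to work is $u = s + 2tn - (2t-1)r$ for fixed $s\in\{0,\dots,2t-1\}$, turning the double sum over $(r,u)$ into a double sum over $(r,n)$ for each $s$. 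Then I collect the $q$-exponent $(2t-1)\binom{r}{2} + \binom{s+2tn-(2t-1)r}{2}$ and the sign $(-1)^{s+2tn-(2t-1)r} = (-1)^s(-1)^{(2t-1)r} = (-1)^s(-1)^r$ (using $2t-1$ odd). The target expression has, for each $s$, a factor $(-1)^s q^{\binom{s}{2}}\Theta(q^{(2t-1)(t-s)};q^{2t(2t-1)})x^s$ times $\sum_n q^{2t\binom{n}{2}}(-q^sx^{2t})^n$, where the middle theta is itself $\sum_r (-1)^r q^{2t(2t-1)\binom{r}{2} + (2t-1)(t-s)r}$ after applying the Jacobi triple product form of $\Theta$.

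So the key steps, in order, are: (i) expand $V_t(x)\Theta(x;q)$ as a double series in $(r,u)$; (ii) for each residue $s$ of the $x$-exponent mod $2t$, substitute $u = s + 2tn - (2t-1)r$ and simplify the sign to $(-1)^s(-1)^r$; (iii) expand the quadratic form $(2t-1)\binom{r}{2} + \binom{s+2tn-(2t-1)r}{2}$ and verify by direct computation that it equals $\binom{s}{2} + 2t\binom{n}{2} + sn + 2t(2t-1)\binom{r}{2} + (2t-1)(t-s)r$ — this is the one genuinely computational identity, a check on quadratic polynomials in $r,n$; (iv) recognize the $r$-sum $\sum_r (-1)^r q^{2t(2t-1)\binom{r}{2} + (2t-1)(t-s)r}$ as $\Theta(q^{(2t-1)(t-s)};q^{2t(2t-1)})$ via \eqref{eq:elltrprop} and the Jacobi triple product series for $\Theta$; (v) recognize the $n$-sum as $\sum_n q^{2t\binom{n}{2}}(-q^sx^{2t})^n$ and assemble. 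The main obstacle is step (iii): getting the cross terms and linear terms to balance exactly requires care with the $\binom{\cdot}{2}$ expansions, and in particular confirming that the coefficient of $r^2$ matches ($(2t-1)/2$ from the first binomial plus $(2t-1)^2/2$ from the second should give $(2t-1)\cdot 2t/2 = t(2t-1)$, i.e. $2t(2t-1)\binom{r}{2}$ up to linear terms) and that the residual linear-in-$r$ term is precisely $(2t-1)(t-s)r$. Everything else is bookkeeping with Jacobi triple product identities already available in the paper.
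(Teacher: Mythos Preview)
Your route is genuinely different from the paper's. The paper first invokes the elliptic transformation $Y_t(qx)=-x^{-2t}Y_t(x)$ from Remark~\ref{rem:elltrVY} to obtain the recursion $q^r a_r=-a_{r+2t}$ on Laurent coefficients, which immediately yields $a_{2tn+s}=(-1)^n q^{2t\binom{n}{2}+ns}a_s$ and hence the $n$-sum structure; only then does it compute the $2t$ seed coefficients $a_0,\dots,a_{2t-1}$ by extracting $[x^s]$ from the product of the two theta series. You instead try to obtain the full decomposition in one pass from the double series, without using the functional equation.

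That direct approach can be made to work, but step~(iii) as you wrote it is wrong. The identity
\[
(2t-1)\binom{r}{2}+\binom{s+2tn-(2t-1)r}{2}=\binom{s}{2}+2t\binom{n}{2}+sn+2t(2t-1)\binom{r}{2}+(2t-1)(t-s)r
\]
is false: expanding the left-hand side produces a cross term $-2t(2t-1)nr$, a quadratic term $2t^2n^2$ rather than $tn^2$, and $2tsn$ rather than $sn$. Correspondingly, your sign in~(ii) is $(-1)^s(-1)^r$, whereas the product form you are aiming at carries $(-1)^{s+n+r}$; the missing $(-1)^n$ is another symptom that $r$ and $n$ have not decoupled.

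The fix is one further substitution: after setting $u=s+2tn-(2t-1)r$, shift $r\mapsto r+n$. Then $u=s+n-(2t-1)r$, the sign becomes $(-1)^{s+n+r}$, and using $\binom{a+b}{2}-\binom{a}{2}=ab+\binom{b}{2}$ twice shows
\[
(2t-1)\binom{r+n}{2}+\binom{s+n-(2t-1)r}{2}
=(2t-1)\binom{r}{2}+\binom{s-(2t-1)r}{2}+2t\binom{n}{2}+sn,
\]
so the $n$-dependence separates cleanly. The remaining $r$-sum is exactly the paper's computation of $a_s$, and your steps (iv)--(v) then go through. In effect, your corrected argument and the paper's agree at the level of the $a_s$ computation; the difference is that the paper uses the functional equation to isolate the $n$-structure first, while you recover it by the index shift $r\mapsto r+n$.
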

\begin{proof}[Proof of Proposition \ref{prop:formforY}]
As $Y_{t}(x)$ has no poles in $x\in \C\smallsetminus \{0\}$ we can consider its Laurent series $Y_{t}(x) = \sum_{r} a_r x^r$. Using Remark \ref{rem:elltrVY} we can obtain the relation on the coefficients:
\begin{equation*}
q^r a_r = -a_{r+2t}.
\end{equation*}
We are able to iterate the previous formula and obtain
\begin{equation*}
a_{2tn+s} = (-1)^nq^{2t\binom{n}{2}+ns}a_s.
\end{equation*}
Then using this relation we have
\begin{align*}
Y_{t}(x) = \sum_{r = -\infty}^{\infty}a_r x^r 
&= \sum_{s=0}^{2t-1}\sum_{n = -\infty}^{\infty} a_{2tn+s}x^{2tn+s} \\
&= \sum_{s=0}^{2t-1} a_s x^s \sum_{n = -\infty}^{\infty} (-1)^nq^{2t\binom{n}{2}+ns} x^{2tn} \\
&= \sum_{s=0}^{2t-1} a_s x^s \sum_{n = -\infty}^{\infty} q^{2t\binom{n}{2}} (-q^s x^{2t})^n.
\end{align*}
We can also find the coefficients $a_s$ explicitly. From
\begin{equation*}
Y_{t}(x) = \Theta(-x^{2t-1};q^{2t-1})\Theta(x;q)
\end{equation*}
we find
\begin{equation*}
\sum_{s = -\infty}^{\infty} a_s x^s = \sum_{k = -\infty}^{\infty}  q^{(2t-1)\binom{k}{2}}x^{(2t-1)k} \times \sum_{n = -\infty}^{\infty}  (-1)^nq^{\binom{n}{2}}x^{n}.
\end{equation*}
Hence we can conclude
\begin{align*}
a_s &= \sum_{r = -\infty}^{\infty} (-1)^{s-(2t-1)r} q^{(2t-1)\binom{r}{2}+\binom{s-(2t-1)r}{2}} \\
&= (-1)^s q^{\binom{s}{2}}\sum_{r = -\infty}^{\infty}(-1)^r q^{(2t-1)\binom{r}{2} - (2t-1)sr +\binom{(2t-1)r+1}{2}} \\
&= (-1)^s q^{\binom{s}{2}}\sum_{r = -\infty}^{\infty} (-1)^r q^{2t(2t-1)\binom{r}{2} +(2t-1)(t-s)r} = (-1)^s q^{\binom{s}{2}}\Theta(q^{(2t-1)(t-s)}; q^{2t(2t-1)}).
\end{align*}
Comparing the two calculations above yields the desired result:
\begin{equation*}
Y_{t}(x) = \sum_{s=0}^{2t-1} (-1)^s q^{\binom{s}{2}}\Theta(q^{(2t-1)(t-s)}; q^{2t(2t-1)}) x^s \sum_{n = -\infty}^{\infty} q^{2t\binom{n}{2}} (-q^s x^{2t})^n.\qedhere
\end{equation*}
\end{proof}
\begin{proof}[Proof of Theorem \ref{theorem:newcalc}]
Let us consider the function
\begin{equation*}
G(x) = V_{t}(x)f_{t,m}(x)
\end{equation*}
and find the functional equation for it using the functional equation for $f_{t,m}(x)$ from Section \ref{sect:functeqF}:
\begin{align*}
G(qx) &= V_{t}(qx)f_{t,m}(qx) \\
&= x^{-2t+1}V_{t}(x)\\ 
&\qquad \times \left(-x^{2t-1} f_{t,m}(x) + x^{2t-m-1} + x^{m} - \frac{\Theta(x;q)}{(q)_{\infty}^3} \sum_{k=0}^{2t-1} x^{k-1}q^{k}\theta_{t+k,m+k}\right)\\
&=
-G(x) + x^{-m}V_{t}(x) + x^{-2t+m+1}V_{t}(x) - \frac{x^{-2t}Y_{t}(x)}{(q)_{\infty}^3} \sum_{k=0}^{2t-1} x^k q^{k}\theta_{t+k,m+k}. 
\end{align*}
By definition of $V_{t}(x)$ and Proposition \ref{prop:formforY} we have
\begin{multline}\label{eq:forGqxGx}
G(qx) = -G(x) + \sum_{r = -\infty}^{\infty}  q^{(2t-1)\binom{r}{2}}x^{(2t-1)r-m} + \sum_{r = -\infty}^{\infty}  q^{(2t-1)\binom{r}{2}}x^{(2t-1)r-2t+m+1} \\
- \frac{x^{-2t}}{(q)_{\infty}^3} \sum_{k=0}^{2t-1} x^kq^{k}\theta_{t+k,m+k}\sum_{s=0}^{2t-1} a_s x^s \sum_{r = -\infty}^{\infty}(-1)^r q^{2t\binom{r}{2}} (q^s x^{2t})^r.
\end{multline}
Now let us consider the last term in the previous sum. We find
\begin{align*}
\frac{x^{-2t}}{(q)_{\infty}^3} &\sum_{k=0}^{2t-1} x^kq^{k}\theta_{t+k,m+k}\sum_{s=0}^{2t-1} a_s x^s \sum_{r = -\infty}^{\infty} q^{2t\binom{r}{2}} (-q^s x^{2t})^r
\\
&= \frac{1}{(q)_{\infty}^3} \sum_{r = -\infty}^{\infty}\sum_{s=0}^{2t-1}\sum_{k=0}^{2t-1}(-1)^r q^{k}\theta_{t+k,m+k} a_s  q^{2t\binom{r}{2}} q^{rs} x^{2tr+k+s-2t}.
\end{align*}
Taking $2tr+s=d$, $R(d) = \lfloor \frac{d}{2t} \rfloor$, $S(d) = d- 2t \lfloor \frac{d}{2t} \rfloor$ we obtain
\begin{gather*}
\frac{1}{(q)_{\infty}^3} \sum_{d = -\infty}^{\infty}\sum_{k=0}^{2t-1} (-1)^{R(d)}q^{k}\theta_{t+k,m+k} a_{S(d)}  q^{2t\binom{R(d)}{2}} q^{R(d)S(d)} x^{d+k-2t}.
\end{gather*}
Change the variable $d \to d-k+2t$ we have
\begin{gather*}
\frac{1}{(q)_{\infty}^3} \sum_{d = -\infty}^{\infty}\sum_{k=0}^{2t-1} (-1)^{R(d-k+2t)}q^{k}\theta_{t+k,m+k} a_{S(d-k+2t)}  q^{2t\binom{R(d-k+2t)}{2}} q^{R(d-k+2t)S(d-k+2t)} x^{d}.
\end{gather*}
As $G(x)$ has no poles, we are able to take its Laurent series $G(x) = \sum a_r x^r$. Then by formula (\ref{eq:forGqxGx}) and the previous expression for the last term we find 
\begin{equation*}
q^d a_d = -a_d + C^{(1)}_d + C^{(2)}_d - B_d \\
\end{equation*}
where
\begin{align*}
&C^{(1)}_d = \begin{cases}
    q^{(2t-1)\binom{k}{2}} & \textup{if} \ d = (2t-1)k-m \ \text{with} \ k \in \Z,\\
   0 & \text{otherwise},
\end{cases}\\
&C^{(2)}_d = \begin{cases}
    q^{(2t-1)\binom{k}{2}} & \textup{if} \ d = (2t-1)k-2t+m+1 \ \text{with} \ k \in \Z,\\
   0 & \text{otherwise},
\end{cases}\\
&B_d = \frac{1}{(q)_{\infty}^3} \sum_{k=0}^{2t-1}(-1)^{R(d-k+2t)} q^{k}\theta_{t+k,m+k} a_{S(d-k+2t)}  q^{2t\binom{R(d-k+2t)}{2}} q^{R(d-k+2t)S(d-k+2t)}.
\end{align*}
Hence by moving $-a_r$ to the left, dividing the equation by $(1+q^r)$ and reformulating the last term we obtain
\begin{multline*}
G(x) = \sum_{r = -\infty}^{\infty} \frac{ q^{(2t-1)\binom{r}{2}}x^{(2t-1)r-m}}{(1+q^{(2t-1)r-m})} + \sum_{r = -\infty}^{\infty} \frac{ q^{(2t-1)\binom{r}{2}}x^{(2t-1)r-2t+m+1}}{(1+q^{(2t-1)r-2t+m+1})} -\\- \frac{x^{-2t}}{(q)_{\infty}^3} \sum_{k=0}^{2t-1} x^k q^{k}\theta_{t+k,m+k}\sum_{s=0}^{2t-1} a_s x^s\sum_{r = -\infty}^{\infty} \frac{(-1)^r q^{2t\binom{r}{2}} (q^s x^{2t})^r}{(1+q^{2tr+k+s-2t})}.
\end{multline*}
Recalling the definition of the Appell function (\ref{eq:appell}), we can write
\begin{align*}
G(x) &= x^{-m}\Theta(-x^{2t-1};q^{2t-1})m(x^{-2t+1}q^{2t-1-m},-x^{2t-1};q^{2t-1}) \\
&\qquad + x^{-2t+m+1}\Theta(-x^{2t-1};q^{2t-1})m(x^{-2t+1}q^{m},-x^{2t-1};q^{2t-1}) \\
 &\qquad - \frac{x^{-2t}}{(q)_{\infty}^3}\sum_{k=0}^{2t-1} x^k q^{k}\theta_{t+k,m+k}\\
 &\qquad \qquad \times \sum_{s=0}^{2t-1} (-1)^s q^{\binom{s}{2}}\Theta(q^{(2t-1)(t-s)}; q^{2t(2t-1)}) x^s  \Theta(q^sx^{2t};q^{2t})m(-q^{k}x^{-2t}, q^s x^{2t}; q^{2t}).
\end{align*}
In conclusion, we divide the equation by $V_{t}(x)$ and find
\begin{align*}
f_{t,m}(x) &= x^{-2t+m+1} m(x^{-2t+1}  q^{m},-x^{2t-1}; q^{2t-1})
+x^{-m}   m(x^{-2t+1}q^{2t-1-m} ,-x^{2t-1};q^{2t-1})  \\
&\qquad  -\frac{x^{-2t}}{\Theta(-x^{2t-1};q^{2t-1})(q)_{\infty}^3}  \sum_{k=0}^{2t-1} x^k q^{k}\theta_{t+k,m+k} \\
&\qquad \qquad  \times\sum_{s=0}^{2t-1} (-1)^s q^{\binom{s}{2}}\Theta(q^{(2t-1)(t-s)}; q^{2t(2t-1)}) x^s  \Theta(q^sx^{2t};q^{2t})m(-q^{k}x^{-2t}, q^s x^{2t}; q^{2t}).\qedhere
\end{align*}
\end{proof}

\section{Application of Theorem \ref{theorem:main}: interpretation of $\mathcal{U}_r^{(m)}(x;q)$ in terms of Hecke-type double-sums}\label{sect:applicationmainmt}
First, let us interpret the indefinite binary theta series
\begin{equation*}
\theta_{p,m} = \sum_{r\equiv s \Mod 2} \sg(r,s) (-1)^{\frac{r-s}{2}}q^{\frac{r^2}{8}+\frac{4t-1}{4}rs+\frac{s^2}{8}+\frac{4p-1-2m}{4}r+\frac{1+2m}{4}s}
\end{equation*}
in terms of Hecke-type double-sums (\ref{eq:fabc}).
\begin{proposition} \label{prop:vpmintermsfabc}
We have
\begin{equation*}
\theta_{p,m} = f_{1,4t-1,1}(q^{2p-m}, q^{1+m}; q) + q^{t+p}f_{1,4t-1,1}(q^{2p+2t-m}, q^{2t+m+1}; q).
\end{equation*}
\end{proposition}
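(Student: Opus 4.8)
The plan is to split the defining sum for $\theta_{p,m}$ according to the common parity of $r$ and $s$ and to recognize each of the two resulting pieces as a Hecke-type double-sum $f_{1,4t-1,1}$ with suitable arguments. Since the sum defining $\theta_{p,m}$ runs over $r\equiv s\Mod{2}$, every such pair is uniquely either $(r,s)=(2r',2s')$ or $(r,s)=(2r'+1,2s'+1)$ with $(r',s')\in\Z^2$, so $\theta_{p,m}$ breaks cleanly into two sums over $\Z^2$. Note also that $(4t-1)^2-1>0$, so $f_{1,4t-1,1}$ is a legitimate Hecke-type double-sum in the sense of (\ref{eq:fabc}).

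First I would treat the ``even'' piece via the substitution $r=2r'$, $s=2s'$. Here $(-1)^{(r-s)/2}=(-1)^{r'-s'}=(-1)^{r'+s'}$; the elementary facts $\sg(2r')=\sg(r')$ and $\sg(2s')=\sg(s')$ give $\sg(r,s)=\sg(r',s')$; and a direct computation gives $B_{p,m}(2r',2s')=\frac{r'^2}{2}+(4t-1)r's'+\frac{s'^2}{2}+\frac{4p-1-2m}{2}r'+\frac{1+2m}{2}s'$. Converting the pure squares with $\frac{n^2}{2}=\binom{n}{2}+\frac{n}{2}$, this equals $\binom{r'}{2}+(4t-1)r's'+\binom{s'}{2}+(2p-m)r'+(1+m)s'$, and comparison with (\ref{eq:fabc}) identifies the even piece as $f_{1,4t-1,1}(q^{2p-m},q^{1+m};q)$.

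Next I would treat the ``odd'' piece via $r=2r'+1$, $s=2s'+1$. Again the sign is $(-1)^{r'+s'}$, and since $2r'+1\ge 0\iff r'\ge 0$ for integral $r'$ we get $\sg(2r'+1)=\sg(r')$, hence $\sg(r,s)=\sg(r',s')$. For the exponent I would apply the shift identity (\ref{eq:propertyBplus1}), $B_{p,m}(u+1,v+1)=B_{p,m}(u,v)+tu+tv+t+p$, with $(u,v)=(2r',2s')$; combined with the evaluation of $B_{p,m}(2r',2s')$ above this yields $B_{p,m}(2r'+1,2s'+1)=\binom{r'}{2}+(4t-1)r's'+\binom{s'}{2}+(2p+2t-m)r'+(2t+m+1)s'+(t+p)$. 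Pulling out the constant $q^{t+p}$ and comparing with (\ref{eq:fabc}) identifies the odd piece as $q^{t+p}f_{1,4t-1,1}(q^{2p+2t-m},q^{2t+m+1};q)$. Summing the two pieces gives the claimed identity.

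The argument is essentially bookkeeping, so I do not expect a genuine obstacle; the only points needing a little care are the clean parity-splitting bijection onto $\Z^2$ in each residue class, the identities $\sg(2r')=\sg(2r'+1)=\sg(r')$, and the conversion between $\frac{1}{2}n^2$ and $\binom{n}{2}$ in the exponents (together with tracking the resulting linear terms).
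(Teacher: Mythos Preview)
Your proof is correct and follows essentially the same approach as the paper: both split $\theta_{p,m}$ by the common parity of $r$ and $s$, identify the even piece directly as $f_{1,4t-1,1}(q^{2p-m},q^{1+m};q)$, and identify the odd piece (after extracting $q^{t+p}$) as $f_{1,4t-1,1}(q^{2p+2t-m},q^{2t+m+1};q)$. The only cosmetic difference is that for the odd case you invoke the shift identity (\ref{eq:propertyBplus1}) to reduce to the already-computed even exponent, whereas the paper expands $B_{p,m}(2r+1,2s+1)$ from scratch; both routes are equally short.
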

\begin{proof}[Proof of Proposition \ref{prop:vpmintermsfabc}]
Change the variables $(r,s)$ to $(2r,2s)$, $(2r+1,2s+1)$:
\begin{equation*}
\theta_{p,m} = \left(\sum_{r\equiv s \equiv 0 \Mod 2} + \sum_{r\equiv s \equiv 1 \Mod 2}\right)\sg(r,s) (-1)^{\frac{r-s}{2}}q^{\frac{r^2}{8}+\frac{4t-1}{4}rs+\frac{s^2}{8}+\frac{4p-1-2m}{4}r+\frac{1+2m}{4}s}.
\end{equation*}
Let us consider the first sum
\begin{align*}
\sum_{r\equiv s \equiv 0 \Mod 2}&\sg(r,s) (-1)^{\frac{r-s}{2}}q^{\frac{r^2}{8}+\frac{4t-1}{4}rs+\frac{s^2}{8}+\frac{4p-1-2m}{4}r+\frac{1+2m}{4}s}\\
&=\sum_{r,s \in \Z}\sg(r,s) (-1)^{r-s}q^{\frac{r^2}{2}+(4t-1)rs+\frac{s^2}{2}+\frac{4p-1-2m}{2}r+\frac{1+2m}{2}s} \\
&=\sum_{r,s \in \Z}\sg(r,s) (-1)^{r-s}q^{\binom{r}{2}+(4t-1)rs+\binom{s}{2}+(2p-m)r+(m+1)s}\\
&=
f_{1,4t-1,1}(q^{2p-m}, q^{1+m}; q).
\end{align*}
Considering the second sum yields
\begin{align*}
\sum_{r\equiv s \equiv 1 \Mod 2}&\sg(r,s) (-1)^{\frac{r-s}{2}}q^{\frac{r^2}{8}+\frac{4t-1}{4}rs+\frac{s^2}{8}+\frac{4p-1-2m}{4}r+\frac{1+2m}{4}s}\\
&= \sum_{r,s \in \Z}\sg(r,s) (-1)^{r-s}q^{\frac{(2r+1)^2}{8}+\frac{4t-1}{4}(2r+1)(2s+1)+\frac{(2s+1)^2}{8}+\frac{4p-1-2m}{4}(2r+1)+\frac{1+2m}{4}(2s+1)}\\
&= \sum_{r,s \in \Z}\sg(r,s) (-1)^{r-s}q^{\frac{r^2}{2}+ \frac{r}{2}+ \frac{1}{8}+(4t-1)rs+\frac{4t-1}{2}r+\frac{4t-1}{2}s+\frac{4t-1}{4}+\frac{s^2}{2}+ \frac{s}{2}+ \frac{1}{8}}\\
&\qquad \qquad \cdot q^{(\frac{4p-1-2m}{2})r+\frac{4p-1-2m}{4}+ (\frac{1+2m}{2})s+\frac{1+2m}{4}}\\
&=  q^{t+p}\sum_{r,s \in \Z}\sg(r,s) (-1)^{r-s}q^{\binom{r}{2} +(4t-1)rs + \binom{s}{2} +(2t+2p-m)r +(2t+m+1)s}\\
&=q^{t+p}f_{1,4t-1,1}(q^{2p+2t-m}, q^{2t+m+1}; q).
\end{align*}
To obtain the final result we sum the two pieces.
\end{proof}

\begin{proof}[Proof of Corollary \ref{cor:main}]
The proof is a straightforward application of Proposition \ref{prop:vpmintermsfabc} and identity (\ref{eq:anotherformmain}).
\end{proof}

Remind the reader of Remark \ref{rem:UandF}:
\begin{equation*}
\mathcal{U}_{t}^{(m)}(-x;q)=f_{t+1,m}(x).
\end{equation*}

We see that Corollary \ref{cor:main} gives us a representation of $\mathcal{U}_t^{(m)}(x;q)$ in terms of Hecke-type double-sums.

We can consider cases of $\mathcal{U}^{(m)}_2(x;q)$, $\mathcal{U}^{(m)}_3(x;q)$. 
From Definition \ref{def:Umt} we have
\begin{gather*}
\mathcal{U}_{2}^{(1)}(x;q) := \sum_{n\geq 0} (-x)_{n}(-\frac{q}{x})_{n} q^{n} \sum_{k = 0}^{n} q^{k^2+k}\myatop{n+k}{n-k}_q,\\
\mathcal{U}_{2}^{(2)}(x;q) := \sum_{n\geq 0} (-x)_{n}(-\frac{q}{x})_{n} q^{n} \sum_{k = 0}^{n} q^{k^2+k}\myatop{n+k+1}{n-k}_q.
\end{gather*}
Corollary \ref{cor:main} gives us
\begin{multline*}
\mathcal{U}_{2}^{(1)}(-x;q) = \frac{1}{(q)^3_{\infty}} \sum_{k=0}^{5}(-1)^k q^{\binom{k}{2}+k} (f_{1,11,1}(q^{5+k}, q^{2+k}; q) + q^{6+k}f_{1,11,1}(q^{11+k}, q^{8+k}; q)) \\ \times f_{1,6,30}(x^{-1}q^{k+1}, -q^{5(k+3)}; q),
\end{multline*}
\begin{multline*}
\mathcal{U}_{2}^{(2)}(-x;q) = \frac{1}{(q)^3_{\infty}} \sum_{k=0}^{5}(-1)^k q^{\binom{k}{2}+k} (f_{1,11,1}(q^{4+k}, q^{3+k}; q) + q^{6+k}f_{1,11,1}(q^{10+k}, q^{9+k}; q)) \\ \times f_{1,6,30}(x^{-1}q^{k+1}, -q^{5(k+3)}; q).
\end{multline*}

Similarly for $\mathcal{U}^{(m)}_3(x;q)$ we have
\begin{gather*}
\mathcal{U}_{3}^{(1)}(x;q) := \sum_{n\geq 0} (-x)_{n}(-\frac{q}{x})_{n} q^{n} \sum_{k = 0}^{n} \sum_{j = 0}^{k} q^{k^2+k} q^{j^2+j}\myatop{k+j}{k-j}_q\myatop{n+k+2j}{n-k}_q,\\
\mathcal{U}_{3}^{(2)}(x;q) := \sum_{n\geq 0} (-x)_{n}(-\frac{q}{x})_{n} q^{n} \sum_{k = 0}^{n} \sum_{j = 0}^{k} q^{k^2+k} q^{j^2+j}\myatop{k+j+1}{k-j}_q\myatop{n+k+2j+1}{n-k}_q,\\
\mathcal{U}_{3}^{(3)}(x;q) := \sum_{n\geq 0} (-x)_{n}(-\frac{q}{x})_{n} q^{n} \sum_{k = 0}^{n} \sum_{j = 0}^{k} q^{k^2+k} q^{j^2+j}\myatop{k+j+1}{k-j}_q\myatop{n+k+2j+2}{n-k}_q.
\end{gather*}
Corollary \ref{cor:main} gives us
\begin{multline*}
\mathcal{U}_{3}^{(1)}(-x;q) = \frac{1}{(q)^3_{\infty}}\sum_{k=0}^{7}(-1)^k q^{\binom{k}{2}+k} (f_{1,15,1}(q^{7+k}, q^{2+k}; q) + q^{8+k}f_{1,15,1}(q^{15+k}, q^{10+k}; q)) \\ \times f_{1,8,56}(x^{-1}q^{k+1}, -q^{7(k+4)}; q),
\end{multline*}
\begin{multline*}
\mathcal{U}_{3}^{(2)}(-x;q) = \frac{1}{(q)^3_{\infty}}\sum_{k=0}^{7}(-1)^k q^{\binom{k}{2}+k} (f_{1,15,1}(q^{6+k}, q^{3+k}; q) + q^{8+k}f_{1,15,1}(q^{14+k}, q^{11+k}; q)) \\ \times f_{1,8,56}(x^{-1}q^{k+1}, -q^{7(k+4)}; q),
\end{multline*}
\begin{multline*}
\mathcal{U}_{3}^{(3)}(-x;q) = \frac{1}{(q)^3_{\infty}}\sum_{k=0}^{7}(-1)^k q^{\binom{k}{2}+k} (f_{1,15,1}(q^{5+k}, q^{4+k}; q) + q^{8+k}f_{1,15,1}(q^{13+k}, q^{12+k}; q)) \\ \times f_{1,8,56}(x^{-1}q^{k+1}, -q^{7(k+4)}; q). 
\end{multline*}

\section{Application of Theorem \ref{theorem:main}: interpretation of $\mathcal{U}(x;q)$ in terms of Hecke-type double-sums}\label{sect:applicationmain11}
We have defined $\mathcal{U}(x;q)$ as a special case of vector-valued $\mathcal{U}^{(m)}_{t}(x;q)$ with $t=m=1$. Remark \ref{rem:UandF} tells us that
\begin{equation*}
\mathcal{U}(-x;q)= f_{2,1}(x).
\end{equation*}
So let us study the properties of $f_{2,1}(x)$. Firstly, note that by Proposition \ref{prop:thetaproperties} we can derive that $\theta_{2,1} = \theta_{4,3} = 0$, $\theta_{3,2} = -q^{-1}\theta_{1,1}$, $\theta_{5,4} = q^{-3}\theta_{1,1}$. Thus by Theorem \ref{theorem:main}:
\begin{align*}
f_{2,1}(x) &= \frac{q^{-\frac{3}{8}}\theta_{1,1}}{(q)^3_{\infty}} \left( \sum_{\substack{r,l \in \Z\\ l\equiv 1 \Mod {4}}}  - \sum_{\substack{r,l \in \Z\\ l\equiv 3 \Mod {4}}}\right) \sg(r,l)(-1)^r q^{\frac{r^2}{2}+rl+\frac{3}{8}l^2+\frac{1}{2}r}x^{-r} \\&= \frac{\theta_{1,1}}{(q)^3_{\infty}} \sum_{r,l \in \Z} \sg(r,l)(-1)^{r+l} q^{\frac{r^2}{2}+2rl+\frac{3}{2}l^2+\frac{3}{2}r+\frac{3}{2}l}x^{-r}.
\end{align*}
Hence $f_{2,1}(x)$ in terms of Hecke-type double-sums reads
\begin{equation*}
f_{2,1}(x) =
\frac{\theta_{1,1}}{(q)^3_{\infty}} f_{1,2,3}(x^{-1}q^2,q^3;q).
\end{equation*}
Now we want to calculate $\theta_{1,1}$. To do it, will use the following interesting identity coming from the proof of Theorem \ref{theorem:main}:
\begin{lemma}\label{lemma:interstingidentity} For $t\geq 2$, $1\leq m <t$ and $l \in\Z$ we have
\begin{align*}
  \sum_{k=0}^{2t-1} (-1)^{k} \theta_{t+k,m+k}^* \times \sum_{\substack{r\in \Z\\ r \equiv 2tl+(2t-1)k \Mod{2t(2t-1)}}}&q^{\frac{1}{4t(2t-1)}r^2}\\
  &= 
 \begin{cases}
   (-1)^{m} q^{\frac{1}{8}} (q)^3_{\infty} & \textup{if} \ l \equiv m \Mod {2t-1}, \\
   (-1)^{m+1} q^{\frac{1}{8}} (q)^3_{\infty} & \textup{if} \ l \equiv -m \Mod {2t-1}, \\
   0 & \textup{otherwise}.
\end{cases}
\end{align*}
\end{lemma}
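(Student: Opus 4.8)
The plan is to read this identity off as a compatibility relation that is already hidden inside the proof of Theorem~\ref{theorem:main}. There we set $f_{t,m}(x)=\sum_{r\in\Z}(-1)^r q^{-\frac{1}{2(2t-1)}r^2+\frac12 r}a_r x^{-r}$ and derived the recursion $a_r-a_{r+2t-1}=b_r+C_r$ with $b_r$, $C_r$ given explicitly in \eqref{eq:seqb}, \eqref{eq:seqc}. In the proof of Lemma~\ref{lemma:solverectool}, applied with $d=2t-1$, the only role of the decay $a_r\to0$ as $r\to\pm\infty$ was to force
\[
\sum_{n\equiv l \Mod{2t-1}}b_n=-\sum_{n\equiv l \Mod{2t-1}}C_n\qquad(l\in\Z).
\]
Written out, this is exactly the asserted formula, so it remains to make both sides explicit.

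For the left-hand side I would substitute \eqref{eq:seqb}, interchange the finite sum over $k$ with the sum over $n\equiv l \Mod{2t-1}$, and collect the exponent of $q$ in the $(n,k)$-term. Using $\binom{n+1}{2}=\frac{n^2+n}{2}$ and $\frac1{2(2t-1)}+\frac12=\frac{t}{2t-1}$ this exponent simplifies to $\frac{t}{2t-1}n^2+kn+\binom{k}{2}+k$, and completing the square gives $\frac{t}{2t-1}n^2+kn=\frac{(2tn+(2t-1)k)^2}{4t(2t-1)}-\frac{(2t-1)k^2}{4t}$. Substituting $n=l+(2t-1)j$, the quantity $r:=2tn+(2t-1)k$ runs bijectively over the residue class $2tl+(2t-1)k \Mod{2t(2t-1)}$, so the left-hand side equals
\[
\frac{1}{(q)^3_{\infty}}\sum_{k=0}^{2t-1}(-1)^k q^{\binom{k}{2}+k-\frac{(2t-1)k^2}{4t}}\,\theta_{t+k,m+k}\, S_k(l),
\]
where $S_k(l):=\sum_{r\equiv 2tl+(2t-1)k \Mod{2t(2t-1)}}q^{\frac{r^2}{4t(2t-1)}}$ is precisely the inner sum in the Lemma. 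A direct computation with Definition~\ref{def:indbinthetaser} shows $\binom{k}{2}+k-\frac{(2t-1)k^2}{4t}=\frac{k^2}{4t}+\frac k2=C_{t+k,m+k}+\lambda$ with $\lambda:=\frac{(2m+1)^2-4t(2m+1)+2t}{8(2t-1)}$ independent of $k$; hence $q^{\binom{k}{2}+k-\frac{(2t-1)k^2}{4t}}\theta_{t+k,m+k}=q^{\lambda}\theta^{*}_{t+k,m+k}$, and the left-hand side equals $\frac{q^{\lambda}}{(q)^3_{\infty}}$ times the double sum appearing in the Lemma.

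For the right-hand side I would note that, by \eqref{eq:seqc}, $C_n$ is supported on $n=-m$ and $n=-2t+m+1=m-(2t-1)$, that is, on the residue classes $-m$ and $m$ modulo $2t-1$, which are distinct because $1\le m<t$ gives $0<2m<2t-1$. Expanding $(m-(2t-1))^2$ one finds $C_{-m}=(-1)^m q^{\frac{m^2}{2(2t-1)}-\frac m2}$ and $C_{m-(2t-1)}=(-1)^{m+1}q^{\frac{m^2}{2(2t-1)}-\frac m2}$, so $-\sum_{n\equiv l \Mod{2t-1}}C_n$ equals $(-1)^m q^{\frac{m^2}{2(2t-1)}-\frac m2}$ if $l\equiv m \Mod{2t-1}$, equals $(-1)^{m+1}q^{\frac{m^2}{2(2t-1)}-\frac m2}$ if $l\equiv -m \Mod{2t-1}$, and equals $0$ otherwise. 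Equating the two sides and dividing through by $\frac{q^{\lambda}}{(q)^3_{\infty}}$, the whole statement collapses to the single scalar identity $\frac{m^2}{2(2t-1)}-\frac m2-\lambda=\frac18$, which one checks by putting the left side over the common denominator $8(2t-1)$ and observing that the numerator equals $2t-1$.

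The point that genuinely carries the content — rather than any real obstacle — is to recognize that the desired formula is nothing but the consistency relation $\sum_{n\equiv l}b_n=-\sum_{n\equiv l}C_n$ already exploited in the proof of Lemma~\ref{lemma:solverectool}, and then to push the explicit shapes of $b_n$ and $C_n$ through the quadratic-exponent bookkeeping; the two computations that must come out right are that the $k$-independent constant $\lambda$ factors cleanly, giving $q^{\binom{k}{2}+k-\frac{(2t-1)k^2}{4t}}\theta_{t+k,m+k}=q^{\lambda}\theta^{*}_{t+k,m+k}$, and the scalar identity $\frac{m^2}{2(2t-1)}-\frac m2-\lambda=\frac18$. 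Everything else is routine manipulation of Gaussian exponents.
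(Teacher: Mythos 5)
Your proposal is correct and follows essentially the same route as the paper: both identify the lemma as the relation $\sum_{n\equiv l \smod{2t-1}}b_n=-\sum_{n\equiv l \smod{2t-1}}C_n$ extracted from the proof of Lemma~\ref{lemma:solverectool}, then make both sides explicit by completing the square in the exponent of $b_n$ (producing the inner sum over $r\equiv 2tl+(2t-1)k \smod{2t(2t-1)}$ and the factor $q^{C_{t+k,m+k}}$ that converts $\theta_{t+k,m+k}$ into $\theta^*_{t+k,m+k}$) and evaluating $C_{-m}$, $C_{m-(2t-1)}$. Your constant $\lambda$ agrees with the paper's $-\frac m2+\frac{m^2}{2(2t-1)}-\frac18$, and the final scalar check $\frac{m^2}{2(2t-1)}-\frac m2-\lambda=\frac18$ is exactly the paper's last simplification.
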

\begin{proof}[Proof of Lemma \ref{lemma:interstingidentity}]
By Definitions \ref{eq:seqb} and \ref{eq:seqc}, we have
\begin{equation*}
b_r =  q^{\frac{1}{2(2t-1)}r^2 + \frac{1}{2}r + \binom{r+1}{2}} \frac{1}{(q)^3_{\infty}} \sum_{k=0}^{2t-1} (-1)^{k} q^{\binom{k}{2}+k+r(k-1)} \theta_{t+k,m+k}
\end{equation*}
and
\begin{equation*}
C_r = \begin{cases}
   (-1)^{r} q^{\frac{1}{2(2t-1)}r^2 + \frac{1}{2}r} & \textup{if} \ r \in \{-m, -2t+m+1\},\\
   0 & \textup{if} \ r \notin \{-m, -2t+m+1\}.
\end{cases}
\end{equation*}
Then from the proof of Lemma \ref{lemma:solverectool}, we have
\begin{align*}
\sum_{n \equiv r \Mod {2t-1}}b_n &= -\sum_{n \equiv r \Mod {2t-1}} C_n\\
&= \begin{cases}
   (-1)^{m} q^{\frac{1}{2(2t-1)}(m-2t+1)^2 + \frac{1}{2}(m-2t+1)} & \textup{if} \ r \equiv m \Mod {2t-1}, \\
   (-1)^{m+1} q^{\frac{1}{2(2t-1)}m^2 - \frac{1}{2}m} & \textup{if} \ r \equiv -m \Mod {2t-1}, \\
   0 & \textup{otherwise},
\end{cases}\\
&=
 \begin{cases}
   (-1)^{m} q^{\frac{1}{2(2t-1)}m^2 - \frac{1}{2}m} & \textup{if} \ l \equiv m \Mod {2t-1}, \\
   (-1)^{m+1} q^{\frac{1}{2(2t-1)}m^2 - \frac{1}{2}m} & \textup{if} \ l \equiv -m \Mod {2t-1}, \\
   0 & \text{otherwise}.
\end{cases}
\end{align*}
We also have
\begin{align*}
&\sum_{n \equiv l \Mod {2t-1}}b_n \\
&\qquad = \sum_{r\in \Z} b_{(2t-1)r+l}\\
&\qquad = \frac{1}{(q)^3_{\infty}} \sum_{r\in \Z} q^{\frac{1}{2(2t-1)}((2t-1)r+l)^2 + \frac{1}{2}((2t-1)r+l) + \binom{(2t-1)r+l+1}{2}} \\ &\qquad \qquad \times \sum_{k=0}^{2t-1} (-1)^{k}q^{\binom{k}{2}+k+((2t-1)r+l)(k-1)} \theta_{t+k,m+k}\\
&\qquad = 
\frac{1}{(q)^3_{\infty}} \sum_{k=0}^{2t-1} (-1)^{k} \theta_{t+k,m+k}  \sum_{r\in \Z}q^{t(2t-1)r^2+2trl+\frac{t}{2t-1}l^2+(2t-1)r+l+\binom{k}{2}+k+(2t-1)rk-(2t-1)r+lk-l}\\
&\qquad =\frac{1}{(q)^3_{\infty}} \sum_{k=0}^{2t-1} (-1)^{k} \theta_{t+k,m+k}  \sum_{r\in \Z}q^{t(2t-1)r^2+2trl+\frac{t}{2t-1}l^2+\binom{k}{2}+k+(2t-1)rk+lk}.
\end{align*}
Using the identities
\begin{equation*}
C_{t+k,m+k} = \frac{1}{4t}k^2+\frac{k}{2}+\frac{m}{2}-\frac{m^2}{2(2t-1)}+\frac{1}{8}
\end{equation*}
and
\begin{align*}
&\frac{1}{4t(2t-1)}(2t(2t-1)r+2tl+(2t-1)k)^2 \\
&\qquad \qquad = t(2t-1)r^2+\frac{t}{2t-1}l^2+\frac{2t-1}{4t}k^2+2trl+(2t-1)rk+lk,
\end{align*}
we can rewrite the exponent of $q$ in the last term as
\begin{align*}
 &t(2t-1)r^2+2trl+\frac{t}{2t-1}l^2+\binom{k}{2}+k+(2t-1)rk+lk\\
 &\qquad \qquad = C_{t+k,m+k} + \frac{1}{4t(2t-1)}(2t(2t-1)r+2tl+(2t-1)k)^2 -\frac{m}{2}+\frac{m^2}{2(2t-1)}-\frac{1}{8}.
\end{align*}
Using the fact that  $\theta^{*}_{p,m} = q^{C_{p,m}}\theta_{p,m}$, we are able to produce
\begin{equation*}
\sum_{n \equiv l \Mod {2t-1}}b_n = \frac{q^{-\frac{m}{2}+\frac{m^2}{2(2t-1)}-\frac{1}{8}}}{(q)^3_{\infty}} \sum_{k=0}^{2t-1} (-1)^{k} \theta_{t+k,m+k}^*  \sum_{r\in \Z}q^{\frac{1}{4t(2t-1)}(2t(2t-1)r+2tl+(2t-1)k)^2}.
\end{equation*}
The above calculation yields
\begin{align*}
 \frac{q^{-\frac{m}{2}+\frac{m^2}{2(2t-1)}-\frac{1}{8}}}{(q)^3_{\infty}} &\sum_{k=0}^{2t-1} (-1)^{k} \theta_{t+k,m+k}^*  \sum_{\substack{r\in \Z\\ r \equiv 2tl+(2t-1)k \Mod{2t(2t-1)}}}q^{\frac{1}{4t(2t-1)}r^2}\\
 &= 
 \begin{cases}
   (-1)^{m} q^{\frac{1}{2(2t-1)}m^2 - \frac{1}{2}m} & \textup{if} \ l \equiv m \Mod {2t-1}, \\
   (-1)^{m+1} q^{\frac{1}{2(2t-1)}m^2 - \frac{1}{2}m} & \textup{if} \ l \equiv -m \Mod {2t-1}, \\
   0 & \text{otherwise}.
\end{cases}
\end{align*}
Simplifying yields
\begin{equation*}
  \sum_{k=0}^{2t-1} (-1)^{k} \theta_{t+k,m+k}^*  \sum_{\substack{r\in \Z\\ r \equiv 2tl+(2t-1)k \Mod{2t(2t-1)}}}q^{\frac{1}{4t(2t-1)}r^2} = 
 \begin{cases}
   (-1)^{m} q^{\frac{1}{8}} (q)^3_{\infty} & \textup{if} \ l \equiv m \Mod {2t-1}, \\
   (-1)^{m+1} q^{\frac{1}{8}} (q)^3_{\infty} & \textup{if} \ l \equiv -m \Mod {2t-1}, \\
   0 & \text{otherwise}.
\end{cases}\qedhere
\end{equation*}
\end{proof}
Note that $\theta_{2,1}^*=0$, $\theta_{3,2}^*=-\theta_{1,1}^*$, $\theta_{4,3}^*=0$, $\theta_{5,4}^*=\theta_{1,1}^*$. Hence we can apply Lemma \ref{lemma:interstingidentity} to calculate $\theta_{1,1}$ taking $t=2$, $m=1$, $l=1$:
\begin{equation*}
 \theta_{1,1}^* \times \left(-\sum_{\substack{r \in \Z\\r\equiv 7 \Mod{12}}}+\sum_{\substack{r \in \Z\\r\equiv 1 \Mod{12}}}\right)q^{\frac{1}{24}r^2} = q^{\frac{1}{8}}(q)_{\infty}^3.
\end{equation*}
Recall the definition of the $\eta$-function:
\begin{equation*}
 \eta(q) := \sum_{r \in \Z} \chi(r)q^{\frac{1}{24}r^2} = q^{\frac{1}{24}}(q)_{\infty},
\end{equation*}
where $\chi(r)$ is an even Dirichlet character modulus $12$, such that $\chi(\pm 1)=1$, $\chi(\pm 5)=-1$.
Thus we have
\begin{gather*}
 \theta_{1,1}^* = \eta^2,\\
 \theta_{1,1} = (q)^2_{\infty}.
\end{gather*}
Finally we obtain
\begin{equation} \label{eq:Uandf21andf123}
\mathcal{U}(-x;q) = f_{2,1}(x) = \frac{1}{(q)_{\infty}} f_{1,2,3}(x^{-1}q^2,q^3;q).
\end{equation}
\begin{remark} We are also able to obtain identity (\ref{eq:Uandf21andf123}) as a corollary of equation \cite[(4.17)]{M1}:
\begin{equation}\label{eq:417ident}
(q^2;q^2)_{\infty}\sum_{n=0}^{\infty}q^{2n+1}(-aq;q^2)_n(-q/a;q^2)_n = qf_{3,2,1}(q^6,-aq^3;q^2)
\end{equation}
To see this we take $q \to q^{\frac{1}{2}}$ and $a=-x^{-1}q^{\frac{1}{2}}$ in this equation and apply the Definition \ref{def:Umt} for $t=m=1$. Identity \ref{eq:Uandf21andf123} also derived in \cite[Section 1, Remark 3]{L} using Bailey pairs method. 
\end{remark}

\section{Application of Theorem \ref{theorem:newcalc}: Evaluations of the double-sum $f_{1,2,3}(x^{-1}q^2,q^3;q)$}\label{sect:applicationnewf123}
In this section we convert the Hecke-type double-sum $f_{1,2,3}(x^{-1}q^2,q^3;q)$ to different types of Appell function forms. The first expression we present does not contain theta terms.  For the second expression, we minimize the number of Appell functions. We begin by introducing more convenient notation. Define
\begin{gather*}
\Theta_{a,m}:=\Theta(q^a;q^m), \Bar{\Theta}_{a,m}:=\Theta(-q^a;q^m) \ \ \text{and} \ \ \Theta_{m}:=\Theta_{m,3m}=(q^{m};q^{m})_{\infty}.
\end{gather*}
We give the first expression in the following proposition.
\begin{proposition} \label{prop:newcalcf21} We have
\begin{align*}
f_{1,2,3}(x^{-1}q^2,q^3;q) &= x^{-1}m(x^{-3}q^2,-x^{3};q^{3})(q)_{\infty} + x^{-2}m(x^{-3}q,-x^{3};q^{3})(q)_{\infty} \\
&\qquad + x^{-1}\frac{\Theta_{6,12}\Theta(x^{4};q^{4})}{\Theta(-x^{3};q^{3})} [ x^{-2} m(-x^{-4}q, x^{4}; q^{4})- m(-x^{-4}q^3, x^{4}; q^{4})] \\
&\qquad - \frac{\Theta_{3,12}\Theta(qx^{4};q^{4})}{\Theta(-x^{3};q^{3})} [ x^{-2} m(-x^{-4}q, q x^{4}; q^{4})- m(-x^{-4}q^3, qx^{4}; q^{4})]\\
&\qquad - q^3x^{2}\frac{\Theta_{-3,12}\Theta(q^3x^{4};q^{4})}{\Theta(-x^{3};q^{3})}[  x^{-2}m(-x^{-4}q, q^3 x^{4}; q^{4})- m(-x^{-4}q^3, q^3 x^{4}; q^{4})].
\end{align*}
\end{proposition}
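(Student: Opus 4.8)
The plan is to derive Proposition~\ref{prop:newcalcf21} directly from Theorem~\ref{theorem:newcalc} by specializing to $t=2$, $m=1$ and using the identity $f_{1,2,3}(x^{-1}q^2,q^3;q)=(q)_{\infty}f_{2,1}(x)$ recorded in (\ref{eq:Uandf21andf123}). First I would write out Theorem~\ref{theorem:newcalc} with these parameters: since $2t-1=3$, $2t=4$, $(2t-1)(t-s)=3(2-s)$ and $2t(2t-1)=12$, it states that $f_{2,1}(x)$ equals $x^{-2}m(x^{-3}q,-x^{3};q^{3})+x^{-1}m(x^{-3}q^{2},-x^{3};q^{3})$ minus the double sum $\frac{x^{-4}}{\Theta(-x^{3};q^{3})(q)^{3}_{\infty}}\sum_{k=0}^{3}\sum_{s=0}^{3}x^{k+s}q^{k}\theta_{2+k,1+k}(-1)^{s}q^{\binom{s}{2}}\Theta(q^{3(2-s)};q^{12})\Theta(q^{s}x^{4};q^{4})\,m(-q^{k}x^{-4},q^{s}x^{4};q^{4})$. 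Multiplying through by $(q)_{\infty}$ gives a preliminary formula for $f_{1,2,3}(x^{-1}q^2,q^3;q)$; everything after this is simplification.

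Next I would insert the theta evaluations already collected in Section~\ref{sect:applicationmain11}: by Proposition~\ref{prop:thetaproperties} one has $\theta_{2,1}=\theta_{4,3}=0$, $\theta_{3,2}=-q^{-1}\theta_{1,1}$ and $\theta_{5,4}=q^{-3}\theta_{1,1}$, together with $\theta_{1,1}=(q)^{2}_{\infty}$. Hence only $k=1$ and $k=3$ survive in the outer sum, contributing $x q\,\theta_{3,2}=-x(q)^{2}_{\infty}$ and $x^{3}q^{3}\theta_{5,4}=x^{3}(q)^{2}_{\infty}$ respectively; the factor $(q)^{2}_{\infty}$ is exactly what is needed to reconcile the $(q)_{\infty}$ prefactor produced by (\ref{eq:Uandf21andf123}) with the $(q)^{3}_{\infty}$ in the denominator of Theorem~\ref{theorem:newcalc}, so no powers of $(q)_{\infty}$ remain in the double-sum part. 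I would also observe that the $s=2$ term vanishes identically, because $\Theta(q^{3(2-2)};q^{12})=\Theta(1;q^{12})=0$ by the product form $\Theta(x;q)=(x)_{\infty}(q/x)_{\infty}(q)_{\infty}$ (since $(1)_{\infty}=0$); thus only $s\in\{0,1,3\}$ contribute.

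The last step is pure bookkeeping: for the six surviving pairs $(k,s)$ with $k\in\{1,3\}$ and $s\in\{0,1,3\}$ I would group terms by their common theta factor, namely $\Theta_{6,12}\Theta(x^{4};q^{4})$ for $s=0$, $\Theta_{3,12}\Theta(qx^{4};q^{4})$ for $s=1$, and $\Theta_{-3,12}\Theta(q^{3}x^{4};q^{4})$ for $s=3$; reading off the accompanying powers of $x$ and $q$ then produces the three bracketed pairs of Appell functions in the statement, while the two leading Appell terms carry over unchanged after multiplication by $(q)_{\infty}$. I do not expect a genuine obstacle here: the argument is a one-line specialization followed by routine simplification, and the only delicate points are keeping the signs and the exponents of $x$ and $q$ straight while regrouping the $(k,s)$ terms and tracking the cancellation of the powers of $(q)_{\infty}$ via $\theta_{1,1}=(q)^{2}_{\infty}$.
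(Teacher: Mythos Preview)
Your proposal is correct and follows essentially the same route as the paper: specialize Theorem~\ref{theorem:newcalc} to $t=2$, $m=1$, multiply by $(q)_{\infty}$ via (\ref{eq:Uandf21andf123}), and simplify using $\theta_{2,1}=\theta_{4,3}=0$, $\theta_{3,2}=-q^{-1}\theta_{1,1}$, $\theta_{5,4}=q^{-3}\theta_{1,1}$, $\theta_{1,1}=(q)_{\infty}^{2}$, and $\Theta_{0,12}=0$ (your $s=2$ observation). The paper's own proof is just the one-sentence version of exactly this computation.
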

\begin{proof}[Proof of Proposition \ref{prop:newcalcf21}]
We need to just apply Theorem \ref{theorem:newcalc}
for $t=2$, $m=1$ and use identities $\theta_{2,1} = \theta_{4,3} = 0$, $\theta_{3,2} = -q^{-1}\theta_{1,1}$, $\theta_{5,4} = q^{-3}\theta_{1,1}$, $\theta_{1,1} = (q)_{\infty}^2$, $\Theta_{0,12}=0$. We obtain 
\begin{align*}
f_{1,2,3}(x^{-1}q^2,q^3;q) &= x^{-1}m(x^{-3}q^2,-x^{3};q^{3})(q)_{\infty} + x^{-2}m(x^{-3}q,-x^{3};q^{3})(q)_{\infty} \\
&\qquad + \frac{1}{\Theta(-x^{3};q^{3})}\sum_{s=0}^{3} (-1)^s q^{\binom{s}{2}}x^s\Theta_{6-3s,12} \Theta(q^s x^{4};q^{4})\\
& \qquad \qquad \times( x^{-3} m(-x^{-4}q, q^s x^{4}; q^{4})-x^{-1} m(-x^{-4}q^3, q^s x^{4}; q^{4})).\qedhere
\end{align*}
\end{proof}
Next we use Proposition \ref{prop:newcalcf21} to understand how $f_{1,2,3}(x^{-1}q^2,q^3;q)$ looks ``mod theta'', that is, we want to minimize the number of Appell functions in the Appell function form. 

\begin{corollary} \label{cor:thetamodid} We have
\begin{align*} 
f_{1,2,3}(x^{-1}q^2,q^3;q) &=  x^{-1}m(x^{-3}q^2,-x^{3};q^{3})(q)_{\infty} + x^{-2}m(x^{-3}q,-x^{3};q^{3})(q)_{\infty}\\
&\qquad- x^{-1}\Theta(x;q)m(qx^{-2},-1;q)
\\
&\qquad  - \frac{(q^2;q^2)_{\infty}^3\Theta_{1,2}\Theta(-x;q)\Theta(x^{-3};q^3)}{\Bar{\Theta}_{0,2}\Bar{\Theta}_{1,2}\Theta(-x^{3};q^3)\Theta(-x^{-2};q)}.
\end{align*}
\end{corollary}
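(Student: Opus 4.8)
The plan is to start from Proposition \ref{prop:newcalcf21} and collapse the last three lines — the three bracketed differences of Appell functions with moduli $q^4$, together with their theta prefactors $\Theta_{6,12}$, $\Theta_{3,12}$, $\Theta_{-3,12}$ — into a single Appell function (with modulus $q$) plus a pure theta quotient. The first two Appell terms $x^{-1}m(x^{-3}q^2,-x^3;q^3)(q)_\infty + x^{-2}m(x^{-3}q,-x^3;q^3)(q)_\infty$ are already in the desired form and are carried over unchanged, so all the work is on the remaining piece.

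The key tool is the ``changing-the-modulus'' identity for Appell functions from \cite{M1} (or equivalently the relation expressing $m(x,z;q)$ as a sum of $m(\cdots;q^k)$ against theta functions, e.g. \cite[Theorem 1.3]{M1}): applied with $k=4$ it converts $m(qx^{-2},-1;q)$ into a combination of the four functions $m(-x^{-4}q^{1+4j}, \cdot\,;q^4)$. I would run this identity in reverse. Concretely, I expect that after writing $z = -x^3$ and using the $q^3$- and $q^4$-modulus splitting identities, the six $q^4$-Appell functions appearing in the last three lines of Proposition \ref{prop:newcalcf21} reassemble — because of the precise matching of the arguments $-x^{-4}q$ and $-x^{-4}q^3$ and the shifts $q^s x^4$ for $s=0,1,3$ (note $s=2$ is killed by $\Theta_{0,12}=0$) — into the single term $-x^{-1}\Theta(x;q)m(qx^{-2},-1;q)$, up to a theta residue. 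The residual theta quotient is then identified with $-\dfrac{(q^2;q^2)_\infty^3\,\Theta_{1,2}\,\Theta(-x;q)\,\Theta(x^{-3};q^3)}{\bar\Theta_{0,2}\bar\Theta_{1,2}\,\Theta(-x^{3};q^3)\,\Theta(-x^{-2};q)}$ by a direct theta-function manipulation: expand $\Theta_{6,12},\Theta_{3,12},\Theta_{-3,12}$ and $\Theta(q^s x^4;q^4)$ via the product/series definitions and the elliptic transformation \eqref{eq:elltrprop}, then recognize the resulting finite sum as a quintuple-product-type identity. Throughout I would use $\Theta(q^n x;q) = (-1)^n q^{-\binom n2}x^{-n}\Theta(x;q)$ to normalize all arguments into a common fundamental domain, and the symmetry $m(x,z;q)=m(q/(xz^2),1/z;q)$ together with the ``$\mu$-function'' shift relations for $m$ to match the Appell arguments.

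The main obstacle will be the bookkeeping in the modulus-changing step: verifying that the six $q^4$-Appell functions with their $x$-powers and $q$-powers genuinely telescope into one $q$-Appell function requires tracking the precise coefficient (including sign, power of $q$, and power of $x$) produced by each application of the splitting identity, and there is real danger of an off-by-one or a stray factor of $q^{\binom s2}$. A secondary obstacle is pinning down the exact theta residue: the modulus-changing identities for $m$ carry their own theta correction terms, and these must be combined with the leftover $\Theta_{\pm 3,12}$, $\Theta_{6,12}$ contributions and simplified — this is where the quintuple-product identity enters, and getting its normalization to land exactly on $(q^2;q^2)_\infty^3\Theta_{1,2}/(\bar\Theta_{0,2}\bar\Theta_{1,2})$ (rather than some equivalent but differently-written expression) will take care. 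One convenient sanity check I would run is to specialize $x$ to a root of unity or compare the $q$-expansion of both sides to a few orders, which should quickly catch any such error.
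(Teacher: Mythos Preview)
Your approach matches the paper's: start from Proposition \ref{prop:newcalcf21}, replace each bracket $[\,x^{-2}m(-x^{-4}q,z';q^4)-m(-x^{-4}q^3,z';q^4)\,]$ by $-m(qx^{-2},-1;q)+(\text{theta})$ via a modulus-change identity, and then simplify the prefactors and the residual theta piece. The only adjustments are in the specific tools: the paper uses the \emph{two-term} identity \cite[Corollary 3.10]{M2013} (take $x\to qx^{-2}$, $z\to-1$, and $z'=q^{s}x^{4}$ for each $s$) rather than a four-term $k=4$ split --- note the first arguments $-x^{-4}q,\,-x^{-4}q^{3}$ in Proposition \ref{prop:newcalcf21} are exactly $-q^{-1}(qx^{-2})^{2}$ and $-q(qx^{-2})^{2}$; the prefactor sum $\sum_{s}(-1)^{s}q^{\binom{s}{2}}x^{s}\Theta_{6-3s,12}\Theta(q^{s}x^{4};q^{4})$ collapses to $\Theta(-x^{3};q^{3})\Theta(x;q)$ directly by Proposition \ref{prop:formforY} with $t=2$; and the leftover theta quotient is finished with \cite[Proposition 2.2, (2.4e)]{HM} (take $n=3$, $x\to-x$, $y=x^{-3}$) rather than the quintuple product.
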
 
\begin{remark} \label{rem:thetamod} Recall the definition of the universal mock theta function $g(x;q)$:
\begin{equation} \label{eq:unimockg}
 g(x;q) := x^{-1}\left(-1+\sum_{n=0}^{\infty} \frac{q^{n^2}}{(x)_{n+1}(q/x)_n} \right)
\end{equation}
By [\cite{HM}, Proposition 4.2] it can be expressed in terms of Appell functions:
\begin{equation*} 
g(x;q) = -x^{-1}m(x^{-3}q^2,zx^{3};q^{3}) - x^{-2}m(x^{-3}q,zx^{3};q^{3})+\frac{(q)_{\infty}^2\Theta(xz;q)\Theta(z;q^3)}
{\Theta(x;q)\Theta(z;q)\Theta(x^3z;q^3)}.
\end{equation*}
We take $z=-1$ and find
\begin{equation*}
g(x;q) = -x^{-1}m(x^{-3}q^2,-x^{3};q^{3}) - x^{-2}m(x^{-3}q,-x^{3};q^{3})+\frac{(q)_{\infty}^2\Theta(-x;q)\Theta(-1;q^3)}
{\Theta(x;q)\Theta(-1;q)\Theta(-x^3;q^3)}.
\end{equation*}
Hence Corollary \ref{cor:thetamodid} tells us that
\begin{equation*}
f_{1,2,3}(x^{-1}q^2,q^3;q) \sim -(q)_{\infty} \cdot g(x;q)- x^{-1}\Theta(x;q)m(qx^{-2},-1;q).
\end{equation*}
where $\sim$ means equivalence of two objects ``mod theta''\cite{HM}.

\begin{proof}[Proof of Corollary \ref{cor:thetamodid}]
We use an identity from [\cite{M2013}, Corollary 3.10]:
\begin{multline*}
m(x,z;q) = m(-qx^2,z';q^4) -q^{-1}xm(-q^{-1}x^2,z';q^4)\\
+\frac{z'(q^2;q^2)_{\infty}^3}{\Theta(xz;q)\Theta(z';q^4)}\Big[\frac{\Theta(-qx^2zz';q^2)\Theta(z^2(z')^{-1};q^4)}{\Theta(-qx^2z';q^2)\Theta(z;q^2)}\\-\frac{\Theta(-q^2x^2zz';q^2)\Theta(q^2z^2(z')^{-1};q^4)}{\Theta(-qx^2z';q^2)\Theta(qz;q^2)}\Big].
\end{multline*}
Here we take $x \to qx^{-2}$, $z\to -1$ and for every term take the corresponding $z'\in \{x^4,qx^4,q^2x^4,q^3x^4\}$. Then we simplify the terms using identities $\Theta_{4,2}=\Theta_{6,2}=0$ and Proposition \ref{prop:formforY} for $t=2$:
\begin{equation*}
\frac{1}{\Theta(-x^{3};q^{3})}\sum_{s=0}^{3} (-1)^sq^{\binom{s}{2}}x^s \Theta(q^{3(2-s)};q^{12})\Theta(q^sx^4;q^4) = \Theta(x;q).
\end{equation*}
So we find
\begin{align*} 
f_{1,2,3}(x^{-1}q^2,q^3;q) &=  x^{-1}m(x^{-3}q^2,-x^{3};q^{3})(q)_{\infty} + x^{-2}m(x^{-3}q,-x^{3};q^{3})(q)_{\infty}\\
&\qquad- x^{-1}\Theta(x;q)m(qx^{-2},-1;q)
\\
&\qquad  +
\frac{(q^2;q^2)_{\infty}^3}{\Theta(-x^{3};q^{3})\Theta(-qx^{-2};q)}\Big[x^{3}\frac{\Theta_{6,12}\Theta_{3,2}}{\Bar{\Theta}_{0,2}\Bar{\Theta}_{3,2}}\Theta(x^{-4};q^4)
\\
&\qquad
-q^2x^2\frac{\Theta_{3,12}\Theta_{5,2}}{\Bar{\Theta}_{1,2}\Bar{\Theta}_{4,2}}\Theta(qx^{-4};q^4)-q^7x^4\frac{\Theta_{-3,12}\Theta_{7,2}}{\Bar{\Theta}_{1,2}\Bar{\Theta}_{6,2}}\Theta(q^{-1}x^{-4};q^4)\Big].
\end{align*} 
Then to get the desired result we need to use the elliptic transformation property (\ref{eq:elltrprop}) and the identity
\begin{equation*}
\Theta(x;q)\Theta(y;q^n) = \sum_{k=0}^{n}(-1)^kq^{\binom{k}{2}}x^k\Theta((-1)^nq^{\binom{n}{2}+kn}x^ny;q^{n(n+1)})\Theta(-q^{1-k}x^{-1}y;q^{n+1})
\end{equation*}
from \cite[Proposition 2.2, (2.4e)]{HM} with $n=3$, $x \to -x$ and $y=x^{-3}$.
\end{proof}
\end{remark}

\section{Another evaluation of the double-sum  $f_{1,2,3}(x^{-1}q^2,q^3;q)$} \label{sect:nothetaid}
Using \cite[Section 4.2]{M1} we can derive the Appell function form for $f_{1,2,3}(x^{-1}q^2,q^3;q)$ with the same Appell functions part as in Corollary \ref{cor:thetamodid}. Let us take identity \cite[(4.15)]{M1} and rewrite it in the following form:
\begin{equation*}
\sum_{n=0}^{\infty} q^{2n+1}(-aq;q^2)_n(-q/a;q^2)_n = -qg(-aq;q^2)+a\frac{\Theta(-aq;q^2)}{(q^2;q^2)_{\infty}}m(a^2,-1;q^2)-\frac{1}{2} \cdot\frac{a\Theta(aq;q^2)^3\Theta(a^2;q^4)}{\Theta^2_4\Theta(a^4;q^4)} 
\end{equation*}
where $g(x;q)$ is the universal mock theta function, defined in (\ref{eq:unimockg}). Let us substitute $q \to q^{\frac{1}{2}}$ and $a=-x^{-1}q^{\frac{1}{2}}$ into the previous equation, so we have
\begin{equation*}
\sum_{n=0}^{\infty} q^{n}(-x;q)_n(-q/x;q)_n = -g(q/x;q)-x^{-1}\frac{\Theta(q/x;q)}{(q)_{\infty}}m(qx^{-2},-1;q)+\frac{1}{2} \cdot \frac{x^{-1}\Theta(-q/x;q)^3\Theta(qx^{-2};q^2)}{(q^2;q^2)_{\infty}^2\Theta(q^2x^{-4};q^2)}. 
\end{equation*}
Using identity (\ref{eq:Uandf21andf123}), Definition \ref{def:Umt} and properties of $g(x;q)$ and $\Theta(x;q)$ we obtain
\begin{equation*}
f_{1,2,3}(x^{-1}q^2,q^3;q) = - (q)_{\infty} \cdot g(x;q)-x^{-1}\Theta(x;q)m(qx^{-2},-1;q)-\frac{1}{2}\cdot \frac{x^{-5}(q)_{\infty} \Theta(-x;q)^3\Theta(qx^{-2};q^2)}{(q^2;q^2)_{\infty}^2\Theta(x^{-4};q^2)}. 
\end{equation*}
This identity gives the same ``mod theta'' equivalence as stated in Remark \ref{rem:thetamod}.

If we take identity \cite[(4.16)]{M1}, we obtain a similar representation but it does not consist of theta terms:
\begin{equation*}
f_{1,2,3}(x^{-1}q^2,q^3;q) = -(q)_{\infty} \cdot g(x;q)-x^{-1}\Theta(x;q)m(qx^{-2},x;q).
\end{equation*}

\section*{Acknowledgements}
I would like to offer my gratitude to my scientific advisor Eric T. Mortenson for the suggestion to consider a new $U$-type function and also for helpful comments and observations. As well I want to thank Jeremy Lovejoy for his feedback.

\end{document}